\documentclass[12pt,a4paper]{article}


\usepackage[utf8]{inputenc}
\usepackage[english]{babel}
\usepackage{amsmath}
\usepackage{amsthm}
\usepackage{amsfonts}
\usepackage{amssymb}
\usepackage{graphicx}
\usepackage{e-jc}

\usepackage{latexsym}
\usepackage{url}
\usepackage{subfig}
\usepackage{caption}
\usepackage{multicol}
\usepackage{float}
\usepackage{varioref}
\usepackage{adjustbox}
\usepackage{tikz}
\usepackage{lipsum}


\theoremstyle{plain}
\newtheorem{theorem}{Theorem}
\newtheorem{lemma}[theorem]{Lemma}
\newtheorem{corollary}[theorem]{Corollary}
\newtheorem{proposition}[theorem]{Proposition}

\theoremstyle{definition}
\newtheorem{definition}[theorem]{Definition}

\theoremstyle{remark}

\usepackage[colorlinks=true,citecolor=black,linkcolor=black,urlcolor=blue]{hyperref}


\title{\bf On the Minimum Number of Monochromatic Generalized Schur Triples}
\author{Thotsaporn Thanatipanonda \qquad Elaine Wong\\ \small Science Division\\[-0.8ex]
\small Mahidol University International College\\[-0.8ex] 
\small Nakhon Pathom, Thailand\\
\small\tt \{thotsaporn,wongey\}@gmail.com}
\date{\dateline{Sept. 25, 2016}{Apr 23, 2017}\\
\small Mathematics Subject Classifications: 05C55}


\begin{document}

\maketitle

\begin{abstract}
\noindent
The solution to the problem of finding the minimum number of monochromatic triples $(x,y,x+ay)$ with $a\geq 2$ being a fixed positive integer over any 2-coloring of $[1,n]$ was conjectured by Butler, Costello, and Graham (2010) and Thanathipanonda (2009).  We solve this problem using a method based on Datskovsky's proof (2003) on the minimum number of monochromatic Schur triples $(x,y,x+y)$.  We do this by exploiting the combinatorial nature of the original proof and adapting it to the general problem.

\bigskip\noindent \textbf{Keywords:} Schur Triples, Ramsey Theory on Integers, Rado Equation, Optimization
  
\end{abstract}

\section{Introduction}

\noindent Ramsey theory has a rich history first popularized in 1935 by Erd\"os and Szekeres in their seminal paper \cite{erdos}.  We investigate a part of the theory that was orginally developed by Issai Schur.  The formulation of Schur's theorem was first derived from Van der Waerden's theorem in 1927 \cite{vanderwaarden}.  Van der Waerden proved that any $r$-coloring of $\mathbb{Z}^+$ must admit a monochromatic 3-term arithmetic progression $\lbrace a, a+d, a+2d \rbrace$ for some $a,d>1$.  A particular choice of $x,y$ and $z$ in terms of $a$ and $d$ admits a monochromatic solution to $x+y=2z$, on a plane whose coordinates are the positive integers.  Hence, a similar question regarding the coloring of monochromatic solutions to a simpler equation can be posed; namely, does there exist a least positive integer $s=s(r)$ such that for any $r$-coloring of $[1,s]$ there is a monochromatic solution to $x + y = z$?  Schur determined that the answer is yes, and we call the solution $(x,y,z)$ to such an equation a Schur triple.
\vspace{0.3cm}

\noindent In 1959, Goodman \cite{goodman} was able to determine the minimum number of monochromatic triangles under a 2-edge coloring of a complete graph on $n$ vertices, which turned out to be the same order as the average, $\frac{n^3}{24}+\mathcal{O}(n^2)$.  This motivated Graham \cite{graham} to pose the problem of finding the minimum number of monochromatic Schur triples over any 2-coloring of $[1,n]$ at a conference and to offer 100 USD for the result.  Graham initially conjectured that the average value should be the minimum at $\frac{n^2}{16}+\mathcal{O}(n)$.  However, Zeilberger and his student Robertson \cite{zeilberger} used discrete calculus to show that the minimum number must be $\frac{n^2}{22}+\mathcal{O}(n)$ and won the cash prize.  Around the same time, Schoen \cite{schoen}, followed four years later by Datskovsky \cite{datmono}, furnished different proofs using Fourier analysis to show that indeed, $\frac{n^2}{22}+\mathcal{O}(n)$ is the correct minimum.   Ultimately, their idea had reduced to one in combinatorics. More recently in 2009, Thanatiponanda \cite{aek} confirmed the result using a new technique with computer algebra and a greedy algorithm.  He conjectured a minimum number of monochromatic Schur triples for all $r$-colorings and a minimum number of monochromatic triples satisfying $x+ay=z$ for a fixed integer $a\geq 2$ over any 2-coloring of $[1,n]$.  We solve the latter part of the conjecture in this paper using a purely combinatorial approach.

\section{The Minimum Number of Monchromatic Schur Triples}

We first show how to find the minimum coloring of $x+y=z$ in an elementary way using the method by Datkovsky \cite{datmono}.  Then, we explore the more general case in the next two sections.  We start by employing a 2-coloring on all integers in $[1,n]$ for $n<\infty$ and count the number of monochromatic Schur triples $(x,y,z)$ where $z=x+y$.  Denote the colors to be red ($R$) and blue ($B$).
\vspace{0.2cm}

\noindent The number of Schur triples includes the number of monochromatic Schur triples $|\mathcal{M}(n)|$ and non-monochromatic Schur triples $|\mathcal {N}(n)|$.  

\begin{lemma}\label{numberofschurtriples}
\begin{equation*}
|\mbox{Schur Triples}|= |\mathcal{M}(n)|+|\mathcal{N}(n)|.
\end{equation*}
\end{lemma}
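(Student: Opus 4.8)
The plan is to observe that this is a straightforward disjoint partition, counted via the addition principle. First I would fix the 2-coloring $c\colon[1,n]\to\{R,B\}$ and make precise what a Schur triple is in this setting: an ordered (or unordered, depending on the paper's convention) triple $(x,y,z)$ with $x,y,z\in[1,n]$ satisfying $z=x+y$. The total collection of such triples depends only on $n$ and the defining equation, not on the coloring, so $|\text{Schur Triples}|$ is a well-defined quantity that is fixed before any coloring is applied.

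Next I would classify each Schur triple according to the colors assigned to its three entries. A triple is placed in $\mathcal{M}(n)$ precisely when $c(x)=c(y)=c(z)$ (that is, all three entries share a single color, either all red or all blue), and it is placed in $\mathcal{N}(n)$ precisely when the three entries do not all receive the same color. The central point is that these two defining conditions are mutually exclusive and jointly exhaustive: for any fixed triple, either its three entries agree in color or they do not, and never both. Hence $\mathcal{M}(n)$ and $\mathcal{N}(n)$ are disjoint, and their union is exactly the set of all Schur triples.

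From this partition the identity follows immediately by the additive counting principle for finite disjoint sets: $|\mathcal{M}(n)\cup\mathcal{N}(n)|=|\mathcal{M}(n)|+|\mathcal{N}(n)|$, and since the left-hand union is the complete set of Schur triples, we obtain $|\text{Schur Triples}|=|\mathcal{M}(n)|+|\mathcal{N}(n)|$. I do not anticipate a genuine obstacle here, as the statement is essentially a bookkeeping identity; the only care required is in fixing the convention (ordered versus unordered triples, and whether degenerate cases such as $x=y$ are counted) consistently so that the same convention governs all three quantities. Once the convention is pinned down, the disjointness and exhaustiveness are immediate, and the lemma serves mainly to set up the later strategy of bounding $|\mathcal{M}(n)|$ by controlling the nonmonochromatic count $|\mathcal{N}(n)|$ against the fixed total.
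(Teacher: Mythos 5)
Your proposal is correct and matches the paper's treatment: the paper states this lemma without proof precisely because it is the immediate partition of the set of Schur triples into monochromatic and non-monochromatic ones, which is exactly the disjoint-union argument you give. Your remark about fixing the ordered-triple convention consistently is sensible and consistent with the paper's convention in Lemma \ref{countschurtriples}.
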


\begin{lemma}\label{countschurtriples}
The number of Schur triples in $[1,n]$ is $\displaystyle \frac{1}{2}\binom{n}{2}.$
\end{lemma}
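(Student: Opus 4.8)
The plan is to set up a bijection between Schur triples and pairs of positive integers, count those pairs, and then account for the obvious symmetry. A Schur triple $(x,y,z)$ in $[1,n]$ is completely determined by the pair $(x,y)$, since $z=x+y$; the only constraints are $x,y\geq 1$ and $x+y\leq n$. So first I would reduce the problem to counting the lattice points $(x,y)$ with $x,y\geq 1$ and $x+y\leq n$.

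To count these I would slice by the value $z=x+y$. For each fixed $z$ with $2\leq z\leq n$, the number of ordered pairs $(x,y)$ of positive integers with $x+y=z$ is exactly $z-1$ (namely $x\in\{1,\dots,z-1\}$). Summing over $z$ then gives $\sum_{z=2}^{n}(z-1)=\sum_{k=1}^{n-1}k=\binom{n}{2}$, which is the number of ordered Schur triples.

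Next I would invoke the symmetry $(x,y)\mapsto(y,x)$, which sends a Schur triple to another Schur triple with the same $z$ and identifies the two orderings of a single triple. Away from the diagonal this involution pairs the ordered triples two-to-one, so the number of (unordered) Schur triples is half of the ordered count, namely $\frac{1}{2}\binom{n}{2}$.

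The main point to watch is the diagonal $x=y$, i.e. the degenerate triples $(x,x,2x)$, which are fixed by the involution and therefore are not halved. I expect this to be the only real subtlety: one must check that these fixed points affect only lower-order terms (there are $\lfloor n/2\rfloor=\mathcal{O}(n)$ of them), so that the stated leading value $\frac{1}{2}\binom{n}{2}$ is the correct main term. An alternative that sidesteps the symmetry bookkeeping is to count unordered pairs directly as $\sum_{z=2}^{n}\lfloor z/2\rfloor$ and evaluate the resulting floor sum, but the ordered-count-then-halve route is cleaner and makes the origin of the factor $\frac{1}{2}$ transparent.
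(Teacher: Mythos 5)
Your proof is correct and takes essentially the same route as the paper: the paper's one-line argument also sets up a $2$-to-$1$ correspondence (a triple is determined by choosing $x$ and $z$, and each triple arises as both $(x,z-x,z)$ and $(z-x,x,z)$), which is your ``count ordered, then halve'' step in compressed form. Your explicit slicing over $z$ and your remark that the diagonal $x=y$ contributes only $\mathcal{O}(n)$ fixed points of the involution is a point of care the paper skips, and it is needed if one wants the statement exactly rather than up to $\mathcal{O}(n)$ (which is all the later lemmas use).
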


\begin{proof}
Observe that a Schur triple can be defined by simply choosing numbers for $x$ and $z$ which gives two triples  $(x,z-x,z)$ and $(z-x,x,z)$.
\end{proof}

\noindent Next we show $|\mathcal{N}(n)|$ can be written in the form of $\mu_B, \mu_R$
and $|N^+|$ all of which are defined as follows:

\begin{definition}
$\mu_B$ denotes the number of blue colorings  of coordinates on $\left[1,n\right]$. 
Similarly, $\mu_R$ denotes the number of red colorings  of coordinates on $\left[1,n\right]$.
\end{definition} 

\noindent Note that $\mu_B+\mu_R=n$. 
\begin{definition}
The set of non-monochromatic pairs in $[1,n]\times[1,n]$ will be denoted as $N(n)$.  In particular, we denote two subsets as follows:
\begin{center}
$N^-=\lbrace (x,y)|\mbox{ } x+y\leq n, x>y \rbrace$\\
$N^+=\lbrace (x,y)|\mbox{ } x+y>n, x<y \rbrace$
\end{center}
\end{definition}

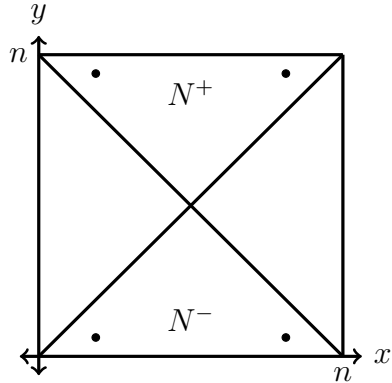
\begin{figure}[ht]
\begin{center}
\begin{tikzpicture}[scale=0.5]
\def\xmin{-4.5}
\def\xmax{4.5}
\def\ymin{-4.5}
\def\ymax{4.5}
\draw[very thick, <->] (\xmin,\ymin+0.5)--(\xmax,\ymin+0.5) node[right,scale=1]{$x$};
\draw[very thick, <->] (\xmin+0.5,\ymin)--(\xmin+0.5,\ymax) node[above,scale=1]{$y$};
\draw (\xmax-0.5,\ymin+0.5) node[below,scale=1] {$n$};
\draw (\xmin+0.5,\ymax-0.5) node[left,scale=1] {$n$};
\draw[very thick] (\xmin+0.5,\ymax-0.5) -- (\xmax-0.5,\ymax-0.5);
\draw[very thick] (\xmin+0.5,\ymin+0.5) -- (\xmax-0.5,\ymax-0.5);
\draw[very thick] (\xmin+0.5,\ymax-0.5) -- (\xmax-0.5,\ymin+0.5);
\draw[very thick] (\xmax-0.5,\ymin+0.5) -- (\xmax-0.5,\ymax-0.5);
\draw (0,\ymax-1.5) node[scale=1]{$N^+$};
\draw (0,\ymin+1.5) node[scale=1]{$N^-$};
\draw[fill] (\xmin+2,\ymax-1) circle [radius=0.1];
\draw[fill] (\xmax-2,\ymax-1) circle [radius=0.1];
\draw[fill] (\xmin+2,\ymin+1) circle [radius=0.1];
\draw[fill] (\xmax-2,\ymin+1) circle [radius=0.1];
\end{tikzpicture}
\caption{The sets $N^-$ and $N^+$}
\label{fig:graphxplusywithdots}
\end{center}
\end{figure}

\begin{proposition}\label{nonmono1}
$|\mathcal{N}(n)| = \dfrac{1}{2}(2\mu_R\mu_B-|N^+|)+\mathcal{O}(n).$
\end{proposition}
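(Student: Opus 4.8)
The plan is to count the non-monochromatic Schur triples by counting the non-monochromatic internal pairs that they contain. I view a Schur triple as a set $\{x,y,z\}$ with $z=x+y$, and I call one of its three internal pairs $\{x,y\}$, $\{x,z\}$, $\{y,z\}$ \emph{non-monochromatic} when its two entries receive different colors. The key elementary observation is that a $2$-coloring of the three elements of a triple is either constant, in which case the triple is monochromatic and has $0$ non-monochromatic internal pairs, or else it splits the elements as $2+1$, in which case the triple is non-monochromatic and exactly two of its three internal pairs are non-monochromatic. Letting $\beta(T)$ denote the number of non-monochromatic internal pairs of a triple $T$ and summing over all triples, this gives
\begin{equation*}
\sum_{T}\beta(T)=2\,|\mathcal N(n)|+\mathcal O(n),
\end{equation*}
where the error term absorbs the $\mathcal O(n)$ degenerate triples with $x=y$.

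Next I would split $\sum_T\beta(T)$ according to the type of internal pair. Let $A$ count the triples whose base pair $\{x,y\}$ is non-monochromatic, and let $D$ count the incidences (triple, tall pair) in which a tall pair $\{x,z\}$ or $\{y,z\}$ is non-monochromatic, so that $\sum_T\beta(T)=A+D$. The base pairs $\{x,y\}$ are precisely the non-monochromatic pairs with $x+y\le n$, which, up to orientation and the $\mathcal O(n)$ boundary, is exactly the set $N^-$; hence $A=|N^-|+\mathcal O(n)$. For the tall pairs I would use the reparametrization sending $(x,y,z)$ to $(a,b)=(\text{a summand},\,z)$: any pair $\{a,b\}$ with $a<b$ in $[1,n]$ is a tall pair of the unique triple $\{a,\,b-a\}$, so as $T$ ranges over all triples its tall pairs run bijectively over all $\binom{n}{2}$ pairs $\{a,b\}$ with $a<b$. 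Consequently the non-monochromatic tall pairs are in bijection with the non-monochromatic pairs of $[1,n]$, which number $\mu_R\mu_B$, and so $D=\mu_R\mu_B$.

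To reach the stated form I would convert $|N^-|$ into the quantities appearing in the proposition. Partitioning the $\mu_R\mu_B$ non-monochromatic pairs of $[1,n]$ by whether their entries sum to at most $n$ or more than $n$, and orienting each pair as in the definitions of $N^-$ and $N^+$, yields $\mu_R\mu_B=|N^-|+|N^+|+\mathcal O(n)$, where the $\mathcal O(n)$ comes from the anti-diagonal $x+y=n$. Substituting $|N^-|=\mu_R\mu_B-|N^+|+\mathcal O(n)$ into $2\,|\mathcal N(n)|=A+D=|N^-|+\mu_R\mu_B+\mathcal O(n)$ then gives $2\,|\mathcal N(n)|=2\mu_R\mu_B-|N^+|+\mathcal O(n)$, which is the proposition after dividing by $2$.

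I expect the only genuinely delicate part to be the consistent bookkeeping of the $\mathcal O(n)$ terms: the degenerate triples with $x=y$, the pairs lying on the anti-diagonal $x+y=n$, and the ordered-versus-unordered conventions that are implicit in $N^-$, $N^+$, and in the count $\tfrac12\binom{n}{2}$ of Lemma~\ref{countschurtriples}. The two structural steps---that every non-monochromatic triple contributes exactly two non-monochromatic internal pairs, and that the tall pairs biject with all pairs of $[1,n]$---are the heart of the argument, and they are precisely what produce the factor $\tfrac12$ and the term $\mu_R\mu_B$ in the final expression.
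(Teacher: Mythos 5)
Your argument is correct and is essentially the paper's own proof: both double-count the (triple, non-monochromatic pair) incidences using the fact that a non-monochromatic triple contains exactly two non-monochromatic pairs, arriving at the same tally $2|N^-|+|N^+| = 2\mu_R\mu_B-|N^+|+\mathcal{O}(n)$. The only cosmetic difference is that you organize the incidences by the pair's role in the triple (base pair versus summand--sum pair, the latter bijecting with all $\mu_R\mu_B$ non-monochromatic pairs), whereas the paper organizes them by how many triples contain each pair (two for pairs in $N^-$, one for pairs in $N^+$).
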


\begin{proof}
\begin{equation*}
\begin{split}
|\mathcal{N}(n)| &= \dfrac{1}{2}|\mbox{non-monochromatic pairs}| \\
&=\dfrac{1}{2}(|N^+|+|N^-|+|N^-|)+\mathcal{O}(n)\\
&=\dfrac{1}{2}(2\mu_R\mu_B-|N^+|)+\mathcal{O}(n).
\end{split}
\end{equation*}

\noindent Each non-monochromatic triple gives two non-monochromatic pairs which gives the first equality. To get the second equality, we observe that the pairs in $N^-$ will contribute to two triples but the pairs in $N^+$ will only contribute to one.  For example, in $[1,10]$, $(5,3)$ gives the triples $(3,2,5)$ and $(5,3,8)$.  But, in $[1,10]$, $(8,9)$ only gives $(1,8,9)$. Finally, the last equality comes from the fact that $|N^+|+|N^-|=\mu_R\mu_B+\mathcal{O}(n).$
\end{proof}

\noindent By putting together Lemmas \ref{numberofschurtriples} and \ref{countschurtriples} and Proposition \ref{nonmono1}, we obtain the next lemma.

\begin{lemma}\label{Dat}
The number of monochromatic Schur triples under a 2-coloring on $[1,n]$ is
\begin{equation*}
|\mathcal{M}(n)|=\frac{n^2}{4}-\frac{1}{2}(2\mu_R\mu_B-|N^+|)+\mathcal{O}(n).
\end{equation*}
\end{lemma}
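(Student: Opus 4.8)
The plan is to assemble the statement directly from the three results already established, since each of the required ingredients has been proved. First I would rearrange Lemma~\ref{numberofschurtriples} to isolate the quantity of interest, writing
\[
|\mathcal{M}(n)| = |\mbox{Schur Triples}| - |\mathcal{N}(n)|.
\]

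Next I would substitute the exact count of Schur triples from Lemma~\ref{countschurtriples} and expand the binomial coefficient, using
\[
\frac{1}{2}\binom{n}{2} = \frac{n(n-1)}{4} = \frac{n^2}{4} + \mathcal{O}(n),
\]
so that the leading term $n^2/4$ appears explicitly and the linear remainder is absorbed into the error term. I would then substitute the expression for $|\mathcal{N}(n)|$ supplied by Proposition~\ref{nonmono1}, namely $\frac{1}{2}(2\mu_R\mu_B - |N^+|) + \mathcal{O}(n)$, to arrive at
\[
|\mathcal{M}(n)| = \frac{n^2}{4} - \frac{1}{2}(2\mu_R\mu_B - |N^+|) + \mathcal{O}(n),
\]
which is precisely the claimed identity.

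Since every step is either a substitution or an elementary algebraic simplification, there is no genuine obstacle here; the proof amounts to careful bookkeeping. The one point deserving a moment of attention is the combination of the $\mathcal{O}(n)$ terms: one such term arises from expanding $\frac{1}{2}\binom{n}{2}$ and another is carried over from Proposition~\ref{nonmono1}. I would simply remark that their sum is again $\mathcal{O}(n)$, so that no error of order $n^2$ is introduced and the stated leading-order behaviour is preserved.
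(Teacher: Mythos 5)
Your proposal is correct and matches the paper exactly: the paper derives Lemma~\ref{Dat} by combining Lemmas~\ref{numberofschurtriples} and~\ref{countschurtriples} with Proposition~\ref{nonmono1}, precisely the substitution and bookkeeping you describe. Your explicit note that the two $\mathcal{O}(n)$ terms combine into a single $\mathcal{O}(n)$ is a reasonable addition to what the paper leaves implicit.
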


\noindent In order to find the minimum value of $|\mathcal{M}(n)|$, we must obtain the lower bound of $|N^+|$ in terms of $\mu_B$ and $\mu_R.$  To do this more efficiently, we denote $D:=|N^-|-|N^+|$ and find an upper bound on $D$ instead.  The proof requires the following notation:

\begin{definition}\label{pair}
Let $S$ be the set of pairs of the form  $\{s,n+1-s\}$ where $1 \leq s \leq n/2.$ Denote by $\mu_{CC'}$ the number of sets $S$ with colorings $C$ and $C'$ in this order. $\gamma n$ is the number of non-monochromatic pairs in $S$.
\end{definition}

\noindent Using our new notation, $\gamma n = \mu_{RB}+\mu_{BR}$.

\begin{lemma}\label{Dlemma}
Assume, without loss of generality, that $\mu_B \geq \mu_R$.  Then
\begin{equation*}
D\leq \frac{\mu_B^2}{4}.
\end{equation*}
\end{lemma}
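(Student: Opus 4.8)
The plan is to bound $D = |N^-| - |N^+|$ by exploiting the reflection symmetry $s \mapsto n+1-s$ that underlies the set $S$ of pairs. First I would set up the key bijection: reflecting a point $(x,y)$ to $(n+1-x, n+1-y)$ swaps the regions $N^-$ and $N^+$ (up to $\mathcal{O}(n)$ boundary terms), since $x+y \leq n$ becomes $(n+1-x)+(n+1-y) \geq n+2 > n$ and the inequality $x>y$ becomes $n+1-x < n+1-y$. This means a pair $(x,y) \in N^-$ has its reflected partner in $N^+$, so the only way $N^-$ can exceed $N^+$ in a given color is when a point and its reflection receive \emph{different} colors — that is, the discrepancy $D$ is controlled entirely by the non-monochromatic reflection-pairs counted by $\mu_{RB}$ and $\mu_{BR}$, i.e. by $\gamma n$.

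Next I would make this precise by decomposing $N^-$ and $N^+$ according to the colorings of the two reflection-coordinates of each point. A point $(x,y) \in N^-$ is monochromatic (say both red) exactly when $x$ and $y$ share a color; its reflection lands in $N^+$ with colors determined by how $n+1-x$ and $n+1-y$ are colored. The monochromatic contributions to $N^-$ and to $N^+$ should cancel in $D$ via this reflection, leaving only terms where the reflection changes the color-pattern. Carrying out this bookkeeping, I expect $D$ to reduce to a quantity of the form (number of red-red non-mono reflection configurations) minus (blue-blue configurations), or more directly an expression quadratic in the counts $\mu_{RB}, \mu_{BR}$ and the surplus $\mu_B - \mu_R$.

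From there the optimization is the routine endgame: one writes $D$ as a function of $\gamma n$ (equivalently of $\mu_{RB}+\mu_{BR}$) and maximizes it subject to the constraints that the number of blue and red singletons in the pairs $S$ are fixed by $\mu_B$ and $\mu_R$. Using $\mu_B \geq \mu_R$ and $\mu_B + \mu_R = n$, I would maximize the resulting expression over the admissible range of $\gamma$; the extremal configuration should place the color imbalance so as to push $D$ as high as possible, and the arithmetic should collapse to the clean bound $D \leq \mu_B^2/4$.

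The main obstacle will be the second step — setting up the correct cancellation in the reflection argument and tracking exactly which color-configurations survive in $D$ versus which cancel. The reflection swaps $N^-$ with $N^+$ only up to $\mathcal{O}(n)$ terms along the anti-diagonal and the diagonal, so I must be careful that these boundary corrections are genuinely lower order and do not corrupt the leading $\mu_B^2/4$ term. Once the surviving contributions are identified correctly as a concrete function of $\mu_{RB}$, $\mu_{BR}$, and the color totals, the maximization itself is a short single-variable calculus or AM-GM argument and presents no real difficulty.
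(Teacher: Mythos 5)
Your proposal follows essentially the same route as the paper: the paper also pairs each $s$ with its reflection $n+1-s$, tabulates the sixteen color configurations of two such pairs $X\times Y$ (finding that all but four contribute zero to $D$, exactly the cancellation you predict), arrives at $D=2(\mu_{BB}-\mu_{RR})(\mu_{RB}-\mu_{BR})$ --- a quadratic in the non-monochromatic pair counts and the surplus, as you anticipated --- and finishes with the one-variable maximization $(\mu_B-\gamma n)\gamma n\leq \mu_B^2/4$. The only imprecision is your opening claim that $D$ is controlled ``entirely by $\gamma n$''; it also depends on the surplus $\mu_{BB}-\mu_{RR}$, but your second paragraph already corrects for this, so the plan is sound.
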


\begin{proof}
\noindent
Assuming $1\leq y<x \leq \frac{n}{2}$, denote the sets $X$ and $Y$ as
\begin{equation*}
\begin{split}
X&=\left\{x,n+1-x\right\}\\
Y&=\left\{y,n+1-y\right\}.
\end{split}
\end{equation*}

\noindent
Ordered pairs in $X\times Y$ when colored \textit{differently} are contained in $N^+\cup N^-$, that is:

\begin{equation*}
\begin{split}
(x,y), (n+1-x,y) \in N^-\\
(x,n+1-y), (n+1-x,n+1-y) \in N^+.
\end{split}
\end{equation*}

\noindent We outline all possible colorings of the $X$ and $Y$ sets that contribute to the value of $D$ in the table in Figure \ref{fig:largechart}. We see that with the exception of the first four cases, the contribution to $D$ is 0.
\vspace{0.2cm}

\noindent We now obtain an upper bound of $D$: 
\begin{align}
D{}&= 2\mu_{RR}\mu_{BR}+2\mu_{BB}\mu_{RB}-2\mu_{RR}\mu_{RB}-2\mu_{BB}\mu_{BR}\notag\\
{\label{proof1:2}}&\leq 2\mu_{RR}\mu_{BR}+2\mu_{BB}\mu_{RB}\\
{\label{proof1:3}}&\leq 2\mu_{BB}(\mu_{BR}+\mu_{RB})\\
{\label{proof1:last}}&=\left(\mu_B-\gamma n\right) \gamma n.
\end{align}

\noindent
The inequality (\ref{proof1:3}) comes from our assumption that $\mu_{BB}\geq\mu_{RR}$.  The last equality comes from $\mu_{BB}= \dfrac{\mu_B-\gamma n}{2}$.
\vspace{0.3cm}

\noindent
Calculus shows that the maximum of (\ref{proof1:last}) occurs when $\gamma = \frac{\mu_B}{2n}$ which simplifies our inequality to
\begin{equation*}
D\leq\left(\mu_B-\frac{\mu_B}{2}\right)\frac{\mu_B}{2}=\frac{\mu_B^2}{4}.
\end{equation*}

\end{proof}

\begin{figure}
\begin{centering}
\adjustbox{max width=\linewidth, keepaspectratio}{
\begin{tabular}{|c|c|c|c|c|p{4cm}|c|}
\hline
Case & $x$ & $n+1-x$ & $y$ & $n+1-y$ & Non-Mono Pairs & $D$ \\ \hline
\multicolumn{7}{|l|}{\textit{Monochromatic X set and Non-monochromatic Y set}}\\
\hline
1 & red & red & blue & red & $(x,y)$&+2\\
&&&&&$(n+1-x,y)$&\\
\hline
2 & blue & blue & red & blue &$(x,y)$&+2\\
&&&&&$(n+1-x,y)$&\\
\hline
3 & red & red & red & blue & $(x,n+1-y)$ & --2\\
&&&&&$(n+1-x,n+1-y)$&\\
\hline
4 & blue & blue & blue & red &$(x,n+1-y)$&--2\\
&&&&&$(n+1-x,n+1-y)$&\\
\hline
\multicolumn{7}{|l|}{\textit{Monochromatic X and Y sets}}\\ \hline
5 & red & red & red & red & none & 0\\ \hline
6 & blue & blue & blue & blue & none & 0\\ \hline
7 & red & red & blue & blue & $(x,y)$& 0\\
&&&&&$(n+1-x,y)$&\\
&&&&&$(x,n+1-y)$&\\
&&&&&$(n+1-x,n+1-y)$&\\ \hline
8 & blue & blue & red & red &$(x,y)$&0\\
&&&&&$(n+1-x,y)$&\\
&&&&&$(x,n+1-y)$&\\
&&&&&$(n+1-x,n+1-y)$&\\ \hline
\multicolumn{7}{|l|}{\textit{Non-monochromatic X set and Monochromatic Y set}}\\ \hline
9 & red & blue & red & red &$(n+1-x,y)$&0\\
&&&&&$(n+1-x,n+1-y)$&\\
\hline
10 & blue & red & blue & blue &$(n+1-x,y)$&0\\
&&&&&$(n+1-x,n+1-y)$&\\
\hline
11 & red & blue & blue & blue &$(x,y)$&0\\
&&&&&$(x,n+1-y)$&\\
\hline
12 & blue & red & red & red &$(x,y)$&0\\
&&&&&$(x,n+1-y)$&\\
\hline
\multicolumn{7}{|l|}{\textit{Non-monochromatic X and Y sets}}\\ \hline
13 & red & blue & blue & red &$(x,y)$&0\\
&&&&&$(n+1-x,n+1-y)$&\\
\hline
14 & blue & red & red & blue &$(x,y)$&0\\
&&&&&$(n+1-x,n+1-y)$&\\
\hline
15 & red & blue & red & blue &$(x,n+1-y)$&0\\
&&&&&$(n+1-x,y)$&\\
\hline
16 & blue & red & blue & red &$(x,n+1-y)$&0\\
&&&&&$(n+1-x,y)$&\\
\hline
\end{tabular}}
\caption{Colorings of the elements in sets $X$ and $Y$}
\label{fig:largechart}
\end{centering}
\end{figure}

\newpage
\begin{theorem}\label{theorem1}
Over all 2-colorings of $[1,n]$, the minimum number of monochromatic Schur triples is $\frac{n^2}{22}+\mathcal{O}(n)$.
\end{theorem}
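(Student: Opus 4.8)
The plan is to fold the preceding results into a single formula for $|\mathcal{M}(n)|$ in terms of the color counts and $D$, insert the bound from Lemma~\ref{Dlemma}, optimize over the blue proportion, and finally produce a coloring that meets the resulting bound.

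First I would eliminate $|N^+|$ from Lemma~\ref{Dat}. Combining $D=|N^-|-|N^+|$ with the identity $|N^+|+|N^-|=\mu_R\mu_B+\mathcal{O}(n)$ gives $|N^+|=\tfrac12(\mu_R\mu_B-D)+\mathcal{O}(n)$, and substituting this into Lemma~\ref{Dat} and collecting terms yields
\begin{equation*}
|\mathcal{M}(n)|=\frac{n^2}{4}-\frac14\bigl(3\mu_R\mu_B+D\bigr)+\mathcal{O}(n).
\end{equation*}
Thus minimizing $|\mathcal{M}(n)|$ is the same as maximizing $3\mu_R\mu_B+D$.

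Assuming without loss of generality that $\mu_B\ge\mu_R$, Lemma~\ref{Dlemma} supplies $D\le\mu_B^2/4$, whence
\begin{equation*}
|\mathcal{M}(n)|\ge\frac{n^2}{4}-\frac14\Bigl(3\mu_R\mu_B+\frac{\mu_B^2}{4}\Bigr)+\mathcal{O}(n).
\end{equation*}
Setting $\mu_B=bn$ and $\mu_R=(1-b)n$ with $\tfrac12\le b\le 1$ (legitimate since $\mu_B+\mu_R=n$), the right-hand side equals $g(b)\,n^2+\mathcal{O}(n)$, where $g(b)=\frac14-\frac34 b(1-b)-\frac{b^2}{16}$. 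Since $g'(b)=-\frac34+\frac{11b}{8}$ and $g''(b)>0$, the unique minimizer on $[\tfrac12,1]$ is $b=\frac{6}{11}$, and a short computation gives $g\bigl(\frac{6}{11}\bigr)=\frac{1}{22}$. This establishes the lower bound $|\mathcal{M}(n)|\ge\frac{n^2}{22}+\mathcal{O}(n)$.

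It remains to show the bound is attained. For this I would exhibit the interval coloring that paints $[1,\frac{4n}{11}]$ and $(\frac{10n}{11},n]$ red and the central block $(\frac{4n}{11},\frac{10n}{11}]$ blue; this places $\mu_R=\frac{5n}{11}$ points red and $\mu_B=\frac{6n}{11}$ points blue, matching the optimal proportion. One then checks, either by directly enumerating monochromatic triples or by verifying that the chain of inequalities in the proof of Lemma~\ref{Dlemma} is simultaneously tight for this coloring (so that $D=\mu_B^2/4$ holds exactly), that it produces $\frac{n^2}{22}+\mathcal{O}(n)$ monochromatic triples. The main obstacle is precisely this construction step: the lower bound is essentially bookkeeping once the lemmas are granted, whereas making the upper bound match requires a coloring for which the $D$-bound and the optimal proportion $b=\frac{6}{11}$ are achieved at once, and confirming tightness of Lemma~\ref{Dlemma} for a concrete coloring takes some care.
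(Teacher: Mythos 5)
Your argument is correct and follows essentially the same route as the paper: you combine Lemma~\ref{Dat} with the identity $|N^+|+|N^-|=\mu_R\mu_B+\mathcal{O}(n)$ and the bound $D\le\mu_B^2/4$ from Lemma~\ref{Dlemma} to get exactly the paper's inequality $|\mathcal{M}(n)|\ge\frac{n^2}{4}-\frac{3}{4}\mu_R\mu_B-\frac{\mu_B^2}{16}+\mathcal{O}(n)$, optimize to find $\mu_B=\frac{6n}{11}$ with value $\frac{n^2}{22}$, and match it with the coloring $\left[R^{4n/11},B^{6n/11},R^{n/11}\right]$. The only difference is presentational: you carry $D$ symbolically rather than eliminating it via $|N^+|$, and you are more explicit about the convexity argument and about what must be verified for the extremal coloring (which the paper simply asserts).
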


\begin{proof}

\noindent An upper bound of the minimum can be obtained from a coloring on $[1,n]$.  We color $\left[R^{4n/11},B^{6n/11},R^{n/11}\right]$ as illustrated in Figure \ref{fig:optimalcoloring}. This proportion comes from a brute force computer search first proposed by Zeilberger \cite{zeilberger}.
\vspace{0.5cm}

\begin{figure}[ht]
\begin{center}
\begin{tikzpicture}[scale=0.9]
\def\xmin{-0.5}
\def\xmax{11.5}
\def\ymin{-1}
\def\ymax{1}
\draw[very thick, <-] (\xmin,\ymin+0.5)--(\xmin+0.5,\ymin+0.5);
\draw[very thick, red, -] (\xmin+0.5,\ymin+0.5)--(\xmin+4.5,\ymin+0.5);
\draw[very thick, blue, -] (\xmin+4.5,\ymin+0.5)--(\xmin+9.5,\ymin+0.5);
\draw[very thick, red, -] (\xmin+9.5,\ymin+0.5)--(\xmin+11,\ymin+0.5);
\draw[very thick, ->] (\xmin+11,\ymin+0.5)--(\xmin+11.5,\ymin+0.5);
\draw (\xmin+0.5,\ymin+0.4) -- (\xmin+0.5,\ymin+0.6);
\draw (\xmin+2.2,\ymin+0.5) node[below,scale=1] {$\frac{4n}{11}$};
\draw[red] (\xmin+2.2,\ymin+0.5) node[above,scale=1] {red};
\draw (\xmin+4.5,\ymin+0.4) -- (\xmin+4.5,\ymin+0.6);
\draw (\xmin+7.2,\ymin+0.5) node[below,scale=1] {$\frac{6n}{11}$};
\draw[blue] (\xmin+7.2,\ymin+0.5) node[above,scale=1] {blue};
\draw (\xmin+9.5,\ymin+0.4) -- (\xmin+9.5,\ymin+0.6);
\draw (\xmin+10.2,\ymin+0.5) node[below,scale=1] {$\frac{n}{11}$};
\draw[red] (\xmin+10.2,\ymin+0.5) node[above,scale=1] {red};
\draw (\xmin+11,\ymin+0.4) -- (\xmin+11,\ymin+0.6);
\end{tikzpicture}
\caption{The Optimal Coloring for $x+y=z$}
\label{fig:optimalcoloring}
\end{center}
\end{figure}
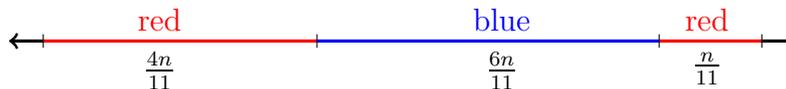

\noindent This coloring will give us $\frac{n^2}{22}+\mathcal{O}(n)$ monochromatic triples.
\vspace{0.2cm}

\noindent Next, we look for the lower bound of the minimum.  By using Lemma \ref{Dlemma} and the fact that $|N^-|+|N^+|=\mu_R\mu_B+\mathcal{O}(n)$, we get:

\begin{equation*}
|N^+| \geq \frac{1}{2}\mu_R\mu_B-\frac{\mu^2_B}{8}+\mathcal{O}(n),
\end{equation*}

\noindent which, together with Lemma \ref{Dat}, gives

\begin{equation}\label{eq:lowerboundM}
|\mathcal{M}(n)|\geq \frac{n^2}{4}-\frac{3}{4}\mu_R\mu_B-\frac{\mu_B^2}{16}+\mathcal{O}(n).
\end{equation}

\noindent The right hand side of (\ref{eq:lowerboundM}) achieves a maximum when $\mu_R=\frac{5n}{11}$ and $\mu_B=\frac{6n}{11}$.  As a result, we get the lower bound for the minimum to be:

\begin{equation*}
|\mathcal{M}(n)|\geq \frac{n^2}{22}+\mathcal{O}(n).
\end{equation*}

\noindent Because the lower and upper bounds match, we have therefore shown the desired result.

\end{proof}

\noindent \textit{Remark.} We can be confident that the bounds for equations
(\ref{proof1:2}) and (\ref{proof1:3}) is sharp relative to the optimal coloring because we know that cases 3 and 4 from Figure \ref{fig:largechart} will not occur and that $\mu_{BR}=0$.
\vspace{0.2cm}

\noindent This method of Datkovsky's also gives the optimal coloring for fixed ratios of $\mu_B$ and $\mu_R.$
\begin{corollary}\label{corollary1}
For any fixed $\mu_B \geq \mu_R$, the coloring on $[1,n]$ that gives the minimum number of monochromatic Schur triples is 
$\left[R^{\frac{n}{2}-\frac{\mu_B}{4}},B^{\mu_B},R^{\frac{n}{2}-\frac{3\mu_B}{4}}\right]$  
for $ \mu_B\leq\frac{2n}{3}$ and 
 $\left[R^{\mu_R},B^{\mu_B}\right]$ for $\mu_B >\frac{2n}{3}$.
\end{corollary}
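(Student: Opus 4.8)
The plan is to fix the two color counts $\mu_B \ge \mu_R$ and ask which \emph{arrangement} of these colors minimizes $|\mathcal{M}(n)|$. With $\mu_B,\mu_R$ held fixed, both $n^2/4$ and $\mu_R\mu_B$ are constants, so Lemma~\ref{Dat} shows that minimizing $|\mathcal{M}(n)|$ is the same as minimizing $|N^+|$. Using $|N^-|+|N^+| = \mu_R\mu_B + \mathcal{O}(n)$ once more, minimizing $|N^+|$ is equivalent to maximizing $D = |N^-| - |N^+|$. Thus the corollary reduces entirely to the question: for a prescribed $\mu_B$, which coloring makes $D$ as large as possible?

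For the upper bound I would run the argument of Lemma~\ref{Dlemma} but halt at the intermediate estimate~(\ref{proof1:last}), namely $D \le (\mu_B - \gamma n)\,\gamma n$ with $\gamma n = \mu_{RB} + \mu_{BR}$, rather than inserting the global optimum $\gamma = \mu_B/(2n)$. The point that drives the case split is the admissible range of $\gamma n$: since $\mu_{RR} = (\mu_R - \gamma n)/2 \ge 0$, we must have $0 \le \gamma n \le \mu_R$, and the inequality $\mu_{BB}\ge\mu_{RR}$ used in~(\ref{proof1:3}) remains valid because $\mu_B\ge\mu_R$. Maximizing the concave quadratic $t\mapsto(\mu_B-t)t$ over $t\in[0,\mu_R]$ then splits naturally: its vertex $t=\mu_B/2$ is feasible precisely when $\mu_B/2\le\mu_R$, i.e. $\mu_B\le 2n/3$, giving maximum $\mu_B^2/4$; when $\mu_B>2n/3$ the function is still increasing at $t=\mu_R$, so the constrained maximum sits at the endpoint $t=\mu_R$, with value $(\mu_B-\mu_R)\mu_R$. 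This is exactly the threshold appearing in the statement.

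The remaining task is to exhibit colorings that realize each optimum, which simultaneously certifies that the upper bound is attained. For $\mu_B\le 2n/3$ I would take the block coloring $\left[R^{n/2-\mu_B/4},B^{\mu_B},R^{n/2-3\mu_B/4}\right]$ and read off the type of each symmetric pair $\{s,n+1-s\}$ as $s$ increases through $[1,n/2]$; the pairs pass monotonically through the types $RR$, then $RB$, then $BB$, yielding $\mu_{RR}=n/2-3\mu_B/4$, $\mu_{RB}=\mu_B/2$, $\mu_{BB}=\mu_B/4$, and $\mu_{BR}=0$. Hence $\gamma n=\mu_B/2$ lands on the interior optimum, and because the types are sorted by $s$ and no $BR$ pair occurs, only Case~2 of Figure~\ref{fig:largechart} can contribute; a direct count then gives $D = 2\mu_{BB}\mu_{RB}=\mu_B^2/4$, meeting the bound. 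The hypothesis $\mu_B\le 2n/3$ is precisely what keeps the trailing red block $n/2-3\mu_B/4$ of nonnegative length. For $\mu_B>2n/3$ the identical bookkeeping applied to $\left[R^{\mu_R},B^{\mu_B}\right]$ gives $\mu_{RR}=\mu_{BR}=0$, $\mu_{RB}=\mu_R$, $\mu_{BB}=(\mu_B-\mu_R)/2$, so $\gamma n=\mu_R$ reaches the boundary optimum and $D=(\mu_B-\mu_R)\mu_R$ again matches.

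The step I expect to be the real obstacle is the tightness verification rather than the optimization. One must check that for these block colorings every inequality borrowed from the proof of Lemma~\ref{Dlemma} is an equality: that the negative-contribution configurations (Cases~3 and~4) truly never occur, and that passing from~(\ref{proof1:2}) to~(\ref{proof1:3}) via $\mu_{BB}\ge\mu_{RR}$ loses nothing. Both hinge on the fact that, for a block coloring, the pair-types appear in nondecreasing order of $s$ while $\mu_{BR}=0$, which forces each $BB$ pair to sit above each $RB$ pair and eliminates exactly the configurations that would subtract from $D$. This is the only place where a careless argument would deliver a one-sided inequality and fail to pin the minimum; once equality is secured, no coloring with the given color counts can beat these, so the displayed colorings are the minimizers, and the two formulas agree at the boundary $\mu_B=2n/3$ (where the trailing red block vanishes), confirming consistency.
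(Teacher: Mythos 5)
Your proposal is correct and follows essentially the same route as the paper's proof: bound $D$ by $(\mu_B-\gamma n)\gamma n$ as in Lemma~\ref{Dlemma}, optimize this quadratic over the feasible range of $\gamma n$ (where the constraint $\gamma n\leq\mu_R$, coming from $\mu_{RR}\geq 0$, produces the case split at $\mu_B=\frac{2n}{3}$), and confirm that the stated block colorings attain the resulting lower bounds. You merely make explicit two points the paper leaves implicit --- the origin of the constraint $\gamma n\leq\mu_R$ and the direct verification that the block colorings achieve $D=\frac{\mu_B^2}{4}$ and $D=(\mu_B-\mu_R)\mu_R$ respectively --- which is a faithful filling-in of the same argument rather than a different one.
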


\begin{proof}
We follow the proof of Lemma \ref{Dlemma} and use
\[ D \leq (\mu_B-\gamma n)\gamma n. \]
For the case $ \mu_B\leq\frac{2n}{3}$, the maximum of $D$ occurs when $\gamma =\frac{\mu_B}{2n}.$ So,
\[|\mathcal{M}(n)|\geq \frac{n^2}{4}-\frac{3}{4}\mu_R\mu_B-\frac{\mu_B^2}{16}+\mathcal{O}(n).\]

\noindent For the case $\mu_B >\frac{2n}{3},$ the maximum of $D$ occurs when
$\gamma = \frac{\mu_R}{n}.$ With similar calculations,
\begin{equation*}
|\mathcal{M}(n)| \geq \frac{n^2}{4}-\mu_R\mu_B+\frac{\mu_R^2}{4}+\mathcal{O}(n).
\end{equation*}
The colorings mentioned in the statement of the corollary give us upper bounds for the minimum, which happens to match the lower bounds.
\end{proof}

\begin{corollary}\label{corollary2}
For any fixed $\mu_B\geq\mu_R$, the coloring on $[1,n]$ that gives the maximum number of monochromatic Schur triples is $\left[R^{\frac{n}{2}-\frac{3\mu_B}{4}},B^{\mu_B},R^{\frac{n}{2}-\frac{\mu_B}{4}}\right]$   for $\mu_B\leq\frac{2n}{3}$ and $\left[B^{\mu_B}, R^{\mu_R}\right]$ for $\mu_B >\frac{2n}{3}$.
\end{corollary}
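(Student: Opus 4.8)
The plan is to run the proof of Corollary~\ref{corollary1} in reverse. Combining Lemma~\ref{Dat} with $|N^-|+|N^+|=\mu_R\mu_B+\mathcal{O}(n)$ and $D=|N^-|-|N^+|$ gives
\begin{equation*}
|\mathcal{M}(n)|=\frac{n^2}{4}-\frac{3}{4}\mu_R\mu_B-\frac{D}{4}+\mathcal{O}(n),
\end{equation*}
so for fixed $\mu_R,\mu_B$ maximizing $|\mathcal{M}(n)|$ is the same as \emph{minimizing} $D$. I therefore need a sharp lower bound on $D$, dual to the upper bound of Lemma~\ref{Dlemma}.

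First I would repeat the bookkeeping of Lemma~\ref{Dlemma}, but now keep the negative contributions and discard the positive ones. Starting from $D=2\mu_{RR}\mu_{BR}+2\mu_{BB}\mu_{RB}-2\mu_{RR}\mu_{RB}-2\mu_{BB}\mu_{BR}$, dropping the two nonnegative terms (Cases~1 and 2 of Figure~\ref{fig:largechart}) and then using $\mu_{RR}\le\mu_{BB}$ gives
\begin{align*}
D&\ge-2\mu_{RR}\mu_{RB}-2\mu_{BB}\mu_{BR}\\
&\ge-2\mu_{BB}(\mu_{RB}+\mu_{BR})=-(\mu_B-\gamma n)\,\gamma n.
\end{align*}
Minimizing $-(\mu_B-\gamma n)\gamma n$ over the admissible range $0\le\gamma n\le\mu_R$ splits into two cases exactly as before: its vertex $\gamma=\frac{\mu_B}{2n}$ is feasible iff $\mu_B\le\frac{2n}{3}$, giving $D\ge-\frac{\mu_B^2}{4}$, while for $\mu_B>\frac{2n}{3}$ the minimum sits at the endpoint $\gamma=\frac{\mu_R}{n}$, giving $D\ge-(\mu_B-\mu_R)\mu_R$. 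Substituting into the formula above yields $|\mathcal{M}(n)|\le\frac{n^2}{4}-\frac34\mu_R\mu_B+\frac{\mu_B^2}{16}+\mathcal{O}(n)$ in the first case, and the analogous bound obtained from $D\ge-(\mu_B-\mu_R)\mu_R$ in the second.

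The main step is then to show the two colorings in the statement attain these bounds. For $\mu_B\le\frac{2n}{3}$ the coloring $[R^{n/2-3\mu_B/4},B^{\mu_B},R^{n/2-\mu_B/4}]$ orders the pairs $\{s,n+1-s\}$ so that the RR pairs occupy the smallest indices, the BR pairs the middle block, and the BB pairs the largest, with no RB pairs; one reads off $\mu_{RR}=\frac n2-\frac{3\mu_B}4$, $\mu_{BR}=\frac{\mu_B}2$, $\mu_{BB}=\frac{\mu_B}4$ and $\gamma n=\frac{\mu_B}2$. I must check the chain above is tight here, which is the mirror of the Remark following Theorem~\ref{theorem1}: since $\mu_{RB}=0$ the Cases~2 and 3 are empty, since every BB index exceeds every BR index the constraint $y<x$ holds throughout Case~4 so that term equals the full product $\mu_{BB}\mu_{BR}$, and since every RR index lies below every BR index the dropped Case~1 has no admissible configuration with $y<x$ and is genuinely $0$. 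This forces $D=-2\mu_{BB}\mu_{BR}=-\frac{\mu_B^2}{4}$ exactly. The case $\mu_B>\frac{2n}{3}$ is identical with $[B^{\mu_B},R^{\mu_R}]$, where $\mu_{RR}=\mu_{RB}=0$, $\mu_{BR}=\mu_R$, $\mu_{BB}=\mu_B-\frac n2$ and $D=-(\mu_B-\mu_R)\mu_R$; the lower and upper bounds then coincide.

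Finally I would record why these are precisely the reversals of the colorings in Corollary~\ref{corollary1}: the reflection $c\mapsto c'$ with $c'(p)=c(n+1-p)$ interchanges $N^-$ and $N^+$, hence sends $D\mapsto-D$ while fixing $\mu_R,\mu_B$ and $\gamma$. Thus $D_{\min}=-D_{\max}$, every maximizer of $|\mathcal{M}(n)|$ is the mirror of a minimizer, and sharpness is inherited from Corollary~\ref{corollary1}. I expect the only delicate point to be this tightness verification—confirming that the index ordering forces the positive Cases~1 and 2 to drop out—since that is exactly what makes the lower bound on $D$ attained rather than merely valid.
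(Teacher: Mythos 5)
Your proposal is correct and follows essentially the same route as the paper: derive the dual lower bound $D\geq -(\mu_B-\gamma n)\gamma n$ by the calculation of Lemma~\ref{Dlemma} with the signs of the contributing cases reversed, optimize over $\gamma$ in the two regimes $\mu_B\leq \frac{2n}{3}$ and $\mu_B>\frac{2n}{3}$, and check that the stated colorings attain the resulting upper bounds on $|\mathcal{M}(n)|$. Your explicit tightness verification and the reflection observation $D\mapsto -D$ under $p\mapsto n+1-p$ are welcome additions that the paper leaves implicit, but they do not change the argument.
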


\begin{proof}
Using a similar calculation to Lemma \ref{Dlemma}, we have that
\[ D \geq -(\mu_B-\gamma n)\gamma n. \]
For the case $ \mu_B\leq\frac{2n}{3}$, the minimum of $D$ occurs when $\gamma =\frac{\mu_B}{2n}.$  So,
\begin{equation*}
|\mathcal{M}(n)| \leq \frac{n^2}{4}-\dfrac{3}{4}\mu_R\mu_B+\dfrac{\mu_B^2}{16}+\mathcal{O}(n).
\end{equation*}
For the case  $\mu_B >\frac{2n}{3}$, the minimum of $D$ occurs when $\gamma =\frac{\mu_R}{n}.$  Therefore, 
\begin{equation*}
|\mathcal{M}(n)| \leq \frac{n^2}{4}-\dfrac{\mu_R\mu_B}{2}-\dfrac{\mu_R^2}{4}+\mathcal{O}(n).
\end{equation*}
The colorings mentioned in the statement of the corollary give us lower bounds of the maximum, which happens to match the upper bounds.
\end{proof}

\section{The Minimum Number of Monochromatic Triples
\texorpdfstring{\\$(x,y,x+2y)$}{}} \label{a=2}

\noindent
The technique illustrated in the previous section can be extended to $x+ay=z$ for any fixed integer $a\geq 2$.  However, the nice symmetry we had previously with the equation $x+y=z$ is no longer there.   In this section we deal with the specific case $a=2$. The general case will be outlined in Section 4.
\vspace{0.2cm}

\noindent We parallel the same argument as in Section 2.  First, we write the number of non-monochromatic triples $|\mathcal{N}(n)|$ in terms of variables we can optimize.

\begin{definition}
Denote by $\mu_{B_1}$ and $\mu_{R_1}$ the number of blue and red colorings respectively on $\left[1,\frac{n}{2}\right]$.  Furthermore, denote by $\mu_{B_2}$ and $\mu_{R_2}$ the number of blue and red colorings respectively on $\left(\frac{n}{2},n\right]$.
\end{definition}

\noindent Note that $\mu_{B_1}+\mu_{R_1}=\frac{n}{2}$ and $\mu_{R_1}+\mu_{R_2}=\mu_R$.

\begin{definition}
The sets of non-monochromatic pairs in $[1,n]\times[1,\frac{n}{2}]$ 
are defined as follows:
\begin{equation*}
\begin{split}
N_x^-=\left\{(x,y)|\, x+2y\leq n,x>2y\right\}\\
N_x^+=\left\{(x,y)| \, x+2y>n,x>2y\right\}\\
N_y^-=\left\{(x,y)| \, x+2y\leq n,x<2y\right\}\\
N_y^+=\left\{(x,y)| \, x+2y>n,x<2y\right\}
\end{split}
\end{equation*}

\end{definition}

\begin{figure}[ht]
\begin{center}
\begin{tikzpicture}[scale=0.65]
\def\xmin{-4.5}
\def\xmax{4.5}
\def\ymin{-4.5}
\def\ymax{4.5}
\draw[very thick, <->] (\xmin,\ymin+0.5)--(5.5,\ymin+0.5) node[right,scale=1]{$x$};
\draw[very thick, <->] (\xmin+0.5,\ymin)--(\xmin+0.5, 1.5) node[above,scale=1]{$y$};
\draw (\xmax-0.5,\ymin+0.5) node[below,scale=1] {$n$};
\draw (\xmin+0.5,0) node[left,scale=1] {$\frac{n}{2}$};
\draw[very thick] (\xmin+0.5,0) -- (\xmax-0.5,0);
\draw[very thick] (\xmin+0.5,\ymin+0.5) -- (\xmax-0.5,0);
\draw[very thick] (\xmin+0.5,0) -- (\xmax-0.5,\ymin+0.5);
\draw[very thick] (\xmax-0.5,\ymin+0.5) -- (\xmax-0.5,0);
\draw (0,-1) node[scale=1]{$N_y^+$};
\draw (0,\ymin+1.3) node[scale=1]{$N_x^-$};
\draw (\xmin+2,\ymin+2.5) node[scale=1]{$N_y^-$};
\draw (\xmax-2,\ymin+2.5) node[scale=1]{$N_x^+$};
\draw[fill] (\xmin+2.5,\ymin+4) circle [radius=0.1];
\draw[fill] (\xmax-2.5,\ymin+4) circle [radius=0.1];
\draw[fill] (\xmin+2.5,\ymin+1) circle [radius=0.1];
\draw[fill] (\xmax-2.5,\ymin+1) circle [radius=0.1];
\end{tikzpicture}
\caption{The sets $N_x^-$, $N_x^+$ $N_y^-$ and $N_y^+$.}
\label{fig:graphxplus2ywithdots}
\end{center}
\end{figure}
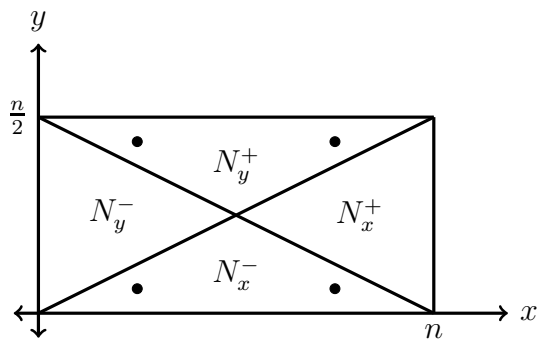

\begin{proposition}\label{nonmono2}
$|\mathcal{N}(n)| \leq \dfrac{1}{2}\left( \dfrac{\mu_R\mu_B}{2}+\mu_R\mu_{B_1}+\mu_B\mu_{R_1}
+|N_x^-|-|N_y^+| \right)+\mathcal{O}(n).$
\end{proposition}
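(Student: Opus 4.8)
The plan is to follow the counting philosophy of Section~2, but to replace the summand symmetry (now broken by the coefficient $2$) with a direct tally over the three edges of each triple. The key observation is that a triple $(x,y,z)$ with $z=x+2y$ carries three constituent pairs $\{x,y\}$, $\{x,z\}$, $\{y,z\}$, and under a $2$-coloring a non-monochromatic triple necessarily splits its colors $2$--$1$ and hence contains \emph{exactly} two bichromatic pairs, whereas a monochromatic triple contains none. Summing over all triples (a valid triple being exactly a lattice point of the triangle $x+2y\le n$, up to the degenerate diagonal $x=y$) gives
\[ 2|\mathcal{N}(n)| = E_{xy}+E_{xz}+E_{yz}+\mathcal{O}(n), \]
where $E_{xy},E_{xz},E_{yz}$ count the triples for which the indicated pair is bichromatic. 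The aim is then to rewrite each of these three edge-counts using the regions $N_x^\pm,N_y^\pm$ and the color totals.

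I would first dispatch the two edges not involving a parity constraint, both of which are handled by exact identities. Since the non-monochromatic pairs $(x,y)$ inside the triangle $x+2y\le n$ are by definition split into $N_x^-$ (where $x>2y$) and $N_y^-$ (where $x<2y$), we get $E_{xy}=|N_x^-|+|N_y^-|$. For the edge $\{y,z\}$, fixing $y$ and $z$ with $z=x+2y$ forces $z>2y$ and $x=z-2y$; relabelling the coordinate $z$ as the ``$x$-axis'' puts these pairs in bijection with the non-monochromatic pairs obeying $x>2y$, so $E_{yz}=|N_x^-|+|N_x^+|$. Combining these with the partition identity
\[ |N_x^-|+|N_x^+|+|N_y^-|+|N_y^+| = \mu_R\mu_{B_1}+\mu_B\mu_{R_1}+\mathcal{O}(n), \]
which merely counts all bichromatic pairs $(x,y)$ with $y\le n/2$ (choosing $x$ from all of $[1,n]$ and $y$ from the first half), collapses the whole statement down to the single inequality $E_{xz}\le \tfrac12\mu_R\mu_B+\mathcal{O}(n)$; the role of the $-|N_y^+|$ term in the proposition is precisely to cancel the $|N_y^+|$ produced by this partition.

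The main obstacle is this remaining bound on $E_{xz}$, and it is the sole source of the inequality (everything above is equality up to $\mathcal{O}(n)$). Here the coefficient $2$ is decisive: fixing the pair $\{x,z\}$ requires $z-x=2y$ to be a positive \emph{even} integer, so $E_{xz}$ counts exactly the bichromatic pairs whose endpoints share the same parity. Writing $\mu_C^{e},\mu_C^{o}$ for the number of even, respectively odd, integers of color $C\in\{R,B\}$, this reads $E_{xz}=\mu_R^{e}\mu_B^{e}+\mu_R^{o}\mu_B^{o}$. I would then use that the even and odd classes each have size $n/2+\mathcal{O}(1)$, which forces $\mu_R^{e}-\mu_R^{o}=-(\mu_B^{e}-\mu_B^{o})+\mathcal{O}(1)$; feeding this into the algebraic identity
\[ \tfrac12\mu_R\mu_B-\bigl(\mu_R^{e}\mu_B^{e}+\mu_R^{o}\mu_B^{o}\bigr)=\tfrac12\bigl(\mu_R^{e}-\mu_R^{o}\bigr)^2+\mathcal{O}(n) \]
shows the difference is nonnegative, giving $E_{xz}\le\tfrac12\mu_R\mu_B+\mathcal{O}(n)$. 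Assembling the two exact edge-counts with this parity bound then yields the stated estimate. I expect the parity split to be the delicate step: it has no counterpart in the symmetric case $a=1$, and it is exactly what produces the factor $\tfrac12$ in front of $\mu_R\mu_B$.
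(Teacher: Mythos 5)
Your proof is correct and follows essentially the same route as the paper: the same $2$--$1$ split giving two bichromatic pairs per non-monochromatic triple, the same identification of the $\{x,y\}$ and $\{y,z\}$ edge-counts with $|N_x^-|+|N_y^-|$ and $|N_x^-|+|N_x^+|$, and the same partition identity to produce the $-|N_y^+|$ term. The only difference is that where the paper bounds the $\{x,z\}$ edge-count by $\tfrac{1}{2}\mu_R\mu_B$ by citing Lemma 3 of Thanatipanonda \cite{aek}, you prove that bound directly via the even/odd decomposition, which makes the argument self-contained.
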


\begin{proof}
We count sets of ordered pairs that come from non-monochromatic triples of type $(x,y,z)$ where $z=x+2y$ as follows:
\begin{equation*}
\begin{split}
\nu_1&= |\{\mbox{non-monochromatic }(x,y) \mbox{ pairs}\}|=|\left\{(x,y)| \, x+2y\leq n\right\}|\\
\nu_2&= |\{\mbox{non-monochromatic }(y,z) \mbox{ pairs}\}|=|\left\{(x,y)| \, y > 2x\right\}|
=|\left\{(x,y)| \, x > 2y \right\}|\\
\nu_3&= |\{\mbox{non-monochromatic }(x,z) \mbox{ pairs}\}|
=|\left\{(x,y)| \, 2 \mbox{ divides } (y-x),x<y\right\}|.
\end{split}
\end{equation*}

\noindent We then have that:
\begin{equation*}
\begin{split}
|\mathcal{N}(n)| &=\dfrac{1}{2}(\nu_1+\nu_2+\nu_3)\\
&\leq \dfrac{1}{2}\left( \dfrac{\mu_R\mu_B}{2}+ 2|N_x^-|+|N_x^+|+|N_y^-|  \right)+\mathcal{O}(n) \\
&= \dfrac{1}{2}\left( \dfrac{\mu_R\mu_B}{2}+\mu_R\mu_{B_1}+\mu_B\mu_{R_1}
+|N_x^-|-|N_y^+| \right)+\mathcal{O}(n).
\end{split}
\end{equation*}

\noindent To obtain the second inequality, observe that
$\nu_1=|N_x^-|+|N_y^-|+\mathcal{O}(n)$, 
$\nu_2=|N_x^-|+|N_x^+|+\mathcal{O}(n)$
and $\nu_3 \leq \dfrac{\mu_R\mu_B}{2}$ (refer to Lemma 3 of Thanatipanonda \cite{aek}).
The last equality comes from $|N_x^-|+|N_x^+|+|N_y^-|+|N_y^+|=\mu_R\mu_{B_1}+\mu_B\mu_{R_1}$
which can be seen from Figure \ref{fig:graphxplus2ywithdots}.
\end{proof}

\noindent The next lemma follows immediately from Proposition \ref{nonmono2}.

\begin{lemma}\label{M2}
The number of monochromatic triples of type $(x,y,x+2y)$ under a 2-coloring on $[1,n]$ is
\begin{equation*}
|\mathcal{M}(n)| \geq \frac{n^2}{4}- \dfrac{1}{2}\left( \dfrac{\mu_R\mu_B}{2}+\mu_R\mu_{B_1}
+\mu_B\mu_{R_1}+|N_x^-|-|N_y^+| \right)+\mathcal{O}(n).
\end{equation*}
\end{lemma}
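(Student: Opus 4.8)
The plan is to mirror exactly the derivation of Lemma \ref{Dat} from Section 2, replacing the Schur-triple count by the count of triples of the form $(x,y,x+2y)$ and replacing the exact expression for $|\mathcal{N}(n)|$ by the upper bound just established in Proposition \ref{nonmono2}. The statement is essentially a bookkeeping consequence, so the substance is already done.

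First I would establish the total number of triples $(x,y,x+2y)$ lying in $[1,n]$. Such a triple is determined by a pair $(x,y)$ with $x\geq 1$, $y\geq 1$ and $x+2y\leq n$; for each fixed $y$ there are $n-2y$ admissible values of $x$, so the total is
\begin{equation*}
\sum_{y=1}^{\lfloor (n-1)/2\rfloor}(n-2y)=\frac{n^2}{4}+\mathcal{O}(n).
\end{equation*}
This is the analog of Lemma \ref{countschurtriples} and accounts for the leading term $\frac{n^2}{4}$ appearing in the statement.

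Next I would invoke the decomposition of Lemma \ref{numberofschurtriples}, which applies verbatim to the present family of triples: every triple $(x,y,x+2y)$ is either monochromatic or non-monochromatic, so
\begin{equation*}
\frac{n^2}{4}+\mathcal{O}(n)=|\mathcal{M}(n)|+|\mathcal{N}(n)|.
\end{equation*}
Solving for $|\mathcal{M}(n)|$ and then substituting the upper bound for $|\mathcal{N}(n)|$ from Proposition \ref{nonmono2} yields the claimed lower bound.

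The only point requiring attention is the direction of the inequality: because Proposition \ref{nonmono2} furnishes an \emph{upper} bound on $|\mathcal{N}(n)|$, subtracting it from the fixed total $\frac{n^2}{4}$ produces a \emph{lower} bound on $|\mathcal{M}(n)|$, which is precisely what the lemma asserts. There is no genuine obstacle here; the combinatorial work was already carried out in Proposition \ref{nonmono2}, and this lemma is merely the step that converts a bound on non-monochromatic triples into a bound on monochromatic ones.
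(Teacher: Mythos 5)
Your proposal is correct and follows exactly the route the paper intends: the paper states that Lemma \ref{M2} ``follows immediately from Proposition \ref{nonmono2},'' the implicit steps being precisely your count of $\frac{n^2}{4}+\mathcal{O}(n)$ total triples $(x,y,x+2y)$ and the decomposition into monochromatic and non-monochromatic triples. Your explicit attention to the direction of the inequality is a welcome clarification of what the paper leaves unsaid.
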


\noindent Minimizing $|\mathcal{M}(n)|$ can be reduced to finding the upper bound of $D_2 :=|N_x^-|-|N_y^+|$ in terms of $\mu_R, \mu_B, \mu_{R_1}, \mu_{B_1}$.  $D_2$ is much more difficult to count than in the Schur triple case.

\begin{definition}\label{halfpair}
Let $S$ be the set of pairs of the form  $\{s,\frac{n}{2}+1-s\}$ where $1 \leq s \leq n/4.$ Denote by $\mu_{CC'}^{(1)}$ the number of sets $S$ with colorings $C$ and $C'$ in this order.  The superscript (1) refers to the coloring of pairs on $\left[1,\frac{n}{2}\right]$. Also, denote by $\gamma_1 n$ the number of non-monochromatic pairs in $S$.
\end{definition}

\noindent With this notation, $\gamma_1 n= \mu_{RB}^{(1)}+\mu_{BR}^{(1)}.$

\begin{definition}
Define sets $X$ and $Y_1$ as follows,

\begin{equation*}
\begin{split}
X&=\left\{x,n+1-x\right\},  \;\  1 \leq x \leq \frac{n}{2}\\
Y_1&=\left\{y,\frac{n}{2}+1-y\right\}, \;\ 1 \leq y \leq \frac{n}{4}.
\end{split}
\end{equation*}

\noindent The direct product $\mu_{CC'} \otimes \mu_{EE'}^{(1)}$ is defined by the number of pairs $(X,Y_1)$ where $X$ has the coloring $\{C,C'\}$ and $Y_1$ has the coloring $\{E,E'\}$ under the condition $2y < x$.

\end{definition}

\begin{lemma}\label{D2lemma}
\begin{equation*}
D_2 = 2\mu_{RR}\otimes\mu_{BR}^{(1)}+2\mu_{BB}\otimes\mu_{RB}^{(1)}
-2\mu_{RR}\otimes\mu_{RB}^{(1)}-2\mu_{BB}\otimes\mu_{BR}^{(1)}.
\end{equation*}
\end{lemma}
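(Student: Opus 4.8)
The plan is to mirror the combinatorial bookkeeping used in Lemma~\ref{Dlemma} for the symmetric Schur case, now adapted to the asymmetric equation $x+2y=z$. The essential idea is to decompose (up to $\mathcal{O}(n)$ boundary cells) the non-monochromatic pairs counted by $|N_x^-|$ and $|N_y^+|$ into disjoint blocks indexed by pairs $(X,Y_1)$ satisfying the defining condition $2y<x$, and then to tally, for each of the sixteen possible colorings of such a block, its net signed contribution to $D_2=|N_x^-|-|N_y^+|$.

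First I would fix a pair $(X,Y_1)$ with $X=\{x,n+1-x\}$, $1\le x\le n/2$, and $Y_1=\{y,\tfrac n2+1-y\}$, $1\le y\le n/4$, subject to $2y<x$. Writing $x'=n+1-x$ and $y'=\tfrac n2+1-y$, the block supplies the four ordered pairs $(x,y),(x',y),(x,y'),(x',y')$. I would then verify membership using only $2y<x$ together with the ranges $x\le n/2$, $y\le n/4$:
\begin{align*}
(x,y),\,(x',y)&\in N_x^-,\\
(x,y'),\,(x',y')&\in N_y^+.
\end{align*}
For instance $(x,y)$ satisfies $x>2y$ and $x+2y\le\tfrac n2+\tfrac n2=n$; for $(x',y)$ one has $x'>\tfrac n2>2y$ and $x'+2y=n+1-(x-2y)\le n$; for $(x,y')$ the strict inequality $x-2y>0$ forces $x+2y'=x+n+2-2y>n$ while $x\le\tfrac n2<n-2y<2y'$; and $(x',y')$ is handled symmetrically. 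I would also argue that as $(X,Y_1)$ ranges over all admissible pairs these blocks sweep out $N_x^-\cup N_y^+$, each pair exactly once: the small element $y$ of $Y_1$ supplies the $y$-coordinates of $N_x^-$ while the large element $y'$ supplies those of $N_y^+$, so there is no omission or double-counting beyond a negligible diagonal.

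With the geometry settled, the computation is a finite tally wholly analogous to the table in Figure~\ref{fig:largechart}. Since $N_x^-$ and $N_y^+$ collect only non-monochromatic pairs, a block with colorings $X=(C,C')$ and $Y_1=(E,E')$ contributes
\[
[C\neq E]+[C'\neq E]-[C\neq E']-[C'\neq E']
\]
to $D_2$, the two plus signs coming from the $N_x^-$ points and the two minus signs from the $N_y^+$ points. Running through all sixteen colorings, every case vanishes except the four in which $X$ is monochromatic while $Y_1$ is not: the pairs $X=RR,\,Y_1=BR$ and $X=BB,\,Y_1=RB$ each give $+2$, whereas $X=RR,\,Y_1=RB$ and $X=BB,\,Y_1=BR$ each give $-2$. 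Weighting each by the number of blocks of that type, i.e.\ by the direct products $\mu_{CC'}\otimes\mu_{EE'}^{(1)}$, yields exactly the claimed identity.

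The step I expect to be the main obstacle is precisely this membership determination together with the clean-partition claim. Unlike the Schur case, the two coordinates live on different scales ($X$ is halved at $n/2$, $Y_1$ at $n/4$), and the single condition $2y<x$ must be shown to be exactly what places all four representatives into $N_x^-$ and $N_y^+$ with no leakage into $N_x^+$ or $N_y^-$. Once this asymmetric geometry is pinned down, the remaining sixteen-case tally is routine.
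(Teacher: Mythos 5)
Your proposal follows essentially the same route as the paper: decompose $N_x^-\cup N_y^+$ into blocks $X\times Y_1$ under the condition $2y<x$, verify that $(x,y),(n+1-x,y)$ land in $N_x^-$ while $(x,\tfrac n2+1-y),(n+1-x,\tfrac n2+1-y)$ land in $N_y^+$, and tally the sixteen colorings to find that only the four mono-$X$/non-mono-$Y_1$ cases contribute $\pm 2$. You are in fact somewhat more explicit than the paper about the membership verifications and the (up to $\mathcal{O}(n)$ boundary terms) exact-cover claim, which the paper leaves implicit.
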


\begin{proof}
\noindent Assuming $1\leq 2y<x\leq \frac{n}{2}$, we observe that the ordered pairs in $X \times Y_1$ when colored \textit{differently} are contained in $N_x^-\cup N_y^+$, that is:
\begin{equation*}
\begin{split}
(x,y), (n+1-x,y) \in N_x^-\\
\left(x,\frac{n}{2}+1-y\right), \left(n+1-x,\frac{n}{2}+1-y\right) \in N_y^+.
\end{split}
\end{equation*}

\noindent The table in Figure \ref{fig:smallchart} shows that there are only four cases that contribute 
any value to $D_2$, while the other cases contribute $0$, similar to the table in Figure \ref{fig:largechart}.
\vspace{0.2cm}

\begin{figure}[ht]
\begin{centering}
\begin{tabular}{|c|c|c|c|c|p{4cm}|c|}
\hline
\multicolumn{7}{|l|}{\textit{Monochromatic $X$ set and Non-monochromatic $Y_1$ set}}\\
\hline
Case & $x$ & $n+1-x$ & $y$ & $\frac{n}{2}+1-y$ & Non-Mono Pairs & $D_2$ \\
\hline
1 & red & red & blue & red & $(x,y)$&+2\\
&&&&&$(n+1-x,y)$&\\
\hline
2 & blue & blue & red & blue &$(x,y)$&+2\\
&&&&&$(n+1-x,y)$&\\
\hline
3 & red & red & red & blue & $\left(x,\frac{n}{2}+1-y\right)$ & --2\\
&&&&&$\left(n+1-x,\frac{n}{2}+1-y\right)$&\\
\hline
4 & blue & blue & blue & red &$\left(x,\frac{n}{2}+1-y\right)$&--2\\
&&&&&$\left(n+1-x,\frac{n}{2}+1-y\right)$&\\
\hline
\end{tabular}
\caption{Colorings of the elements in sets $X$ and $Y_1$ where $D_2\neq 0$}
\label{fig:smallchart}
\end{centering}
\end{figure}

\noindent The result follows immediately.
\end{proof}

\noindent The next proposition gives the upper bound for $D_2$.

\begin{proposition}\label{d2bound}
Assume, without loss of generality, that $\mu_B \geq \mu_R$ and suppose the number of non-monochromatic pairs in S, $\gamma_1 n$, is fixed. Then
\[D_2 \leq   2 A_1,\]
where $A_1$ is the largest possible area under the curve in Figure \ref{fig:d2graphcombined}, with a base of length $\gamma_1 n$ for $\gamma_1 \leq \frac{1}{4}$.
\end{proposition}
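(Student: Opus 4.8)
The plan is to start from the exact expression for $D_2$ in Lemma \ref{D2lemma} and pass to an upper bound by a geometric counting argument, mirroring the strategy used for $D$ in Lemma \ref{Dlemma} but now accounting for the constraint $2y<x$. First I would discard the two negative terms in Lemma \ref{D2lemma}, which is legitimate because each direct product is a count and hence nonnegative. This yields
\[
D_2 \leq 2\left(\mu_{RR}\otimes\mu_{BR}^{(1)}+\mu_{BB}\otimes\mu_{RB}^{(1)}\right),
\]
so it suffices to bound the quantity $T:=\mu_{RR}\otimes\mu_{BR}^{(1)}+\mu_{BB}\otimes\mu_{RB}^{(1)}$, which counts exactly the configurations recorded in Cases 1 and 2 of Figure \ref{fig:smallchart}: pairs $(X,Y_1)$ with $2y<x$ for which either $X$ is colored $RR$ and $Y_1$ is colored $BR$, or $X$ is colored $BB$ and $Y_1$ is colored $RB$.

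The key step is to bound $T$ \emph{without} untangling the coupling between the coloring of the $X$ pairs and that of the $Y_1$ pairs; this coupling (the two pairings share the elements of $[1,\frac{n}{2}]$) is precisely what makes $D_2$ harder than $D$. I would handle it one $Y_1$ pair at a time. Fix a non-monochromatic pair $Y_1=\{y,\frac{n}{2}+1-y\}$. Whether it is colored $BR$ or $RB$, it can be matched by at most all of the admissible $X$ pairs, namely those with $2y<x\leq\frac{n}{2}$, of which there are $\frac{n}{2}-2y$ up to $\mathcal{O}(1)$. Summing this per-pair bound over the set $P$ of positions of the $\gamma_1 n$ non-monochromatic $Y_1$ pairs, and absorbing boundary terms, gives
\[
T \leq \sum_{y\in P}\left(\frac{n}{2}-2y\right)+\mathcal{O}(n).
\]
Because this bound ignores the actual colors of the $X$ pairs, it is insensitive to the coupling, and the normalization $\mu_B\geq\mu_R$ plays no role in it.

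Finally I would read off the right-hand side as an area. The admissible region $\{(x,y):2y<x\leq\frac{n}{2}\}$ is the triangle bounded by the curve $x=2y$ and the line $x=\frac{n}{2}$ (the $N_x$-region of Figure \ref{fig:graphxplus2ywithdots}), and $\sum_{y\in P}\left(\frac{n}{2}-2y\right)$ is precisely the area this triangle cuts out over the base $P\subset[1,\frac{n}{4}]$ of length $\gamma_1 n$. Since the triangle is widest (width $\tfrac{n}{2}$) at $y=0$ and narrows to width $0$ at $y=\frac{n}{4}$, a base of fixed length $\gamma_1 n$ — which fits exactly because $\gamma_1\leq\frac14$ — subtends the greatest area when placed at the wide end, i.e.\ at $y\in[1,\gamma_1 n]$; that maximal area is the quantity $A_1$ depicted in Figure \ref{fig:d2graphcombined}. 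Hence $T\leq A_1+\mathcal{O}(n)$ and therefore $D_2\leq 2A_1$, as claimed. I expect the main obstacle to be the bookkeeping in this geometric step — confirming that the per-pair bound by the full admissible strip is the correct relaxation and that the extremal base placement is genuinely at small $y$ under the constraint $\gamma_1\leq\frac14$ — rather than any delicate inequality, since dropping the negative terms and bounding each $Y_1$ pair by its entire admissible strip deliberately sidesteps the coupling that obstructs an exact count.
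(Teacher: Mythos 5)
There is a genuine gap, and it comes from the very first step: discarding the two negative terms of Lemma \ref{D2lemma} throws away exactly the cancellation that the curve in Figure \ref{fig:d2graphcombined} encodes. After that step, your per-pair bound charges each non-monochromatic $Y_1$ pair at position $y$ with \emph{all} $\tfrac{n}{2}-2y$ admissible $X$ pairs regardless of their colors, so what you actually prove is $D_2 \leq 2\sum_{y\leq\gamma_1 n}\left(\tfrac{n}{2}-2y\right)$, i.e.\ twice the area under the triangle of height $\tfrac{n}{2}$ at $y=0$ descending to $0$ at $y=\tfrac{n}{4}$. That is \emph{not} the curve in Figure \ref{fig:d2graphcombined}: that curve starts at height $\mu_{BB}-\mu_{RR}$, is capped at $\mu_{BB}$ on its plateau, and only on the final descent coincides with $\tfrac{n}{2}-2y$. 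Since the figure's curve lies pointwise below $\tfrac{n}{2}-2y$, your inequality is strictly weaker than the claimed $D_2\leq 2A_1$ whenever $\mu_{BB}<\tfrac{n}{2}$, so the sentence identifying your maximal triangle area with ``the quantity $A_1$ depicted in Figure \ref{fig:d2graphcombined}'' is false and the proposition as stated is not established. The weakened bound is also fatal downstream: it no longer involves $\mu_{BB}$ (equivalently $\mu_B$ and $\gamma n$), and plugging $A_1'=\left(\tfrac{n}{2}-\gamma_1 n\right)\gamma_1 n$ into the optimization of Proposition \ref{N2} allows $\Delta$ up to about $\tfrac{n^2}{4}$ (e.g.\ $\mu_R=\mu_B=\tfrac{n}{2}$, $\mu_{R_1}=\mu_{B_1}=\tfrac{n}{4}$, $\gamma_1 n=\tfrac{n}{4}$), which gives a vacuous lower bound on $|\mathcal{M}(n)|$ instead of $\tfrac{n^2}{44}$.

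The paper's proof keeps the negative terms and regroups $D_2$ as $2\left(\mu_{BB}\otimes\mu_{RB}^{(1)}-\mu_{RR}\otimes\mu_{RB}^{(1)}\right)+2\left(\mu_{RR}\otimes\mu_{BR}^{(1)}-\mu_{BB}\otimes\mu_{BR}^{(1)}\right)$. For each non-monochromatic $Y_1$ pair at position $y$ the relevant quantity is then the \emph{net} count $\#\{BB\ X\text{-pairs with }x>2y\}-\#\{RR\ X\text{-pairs with }x>2y\}$, which is maximized by packing the $RR$ pairs at the far left of $\left[1,\tfrac{n}{2}\right]$ and the $BB$ pairs at the far right; this extremal arrangement is precisely what produces the ramp from $\mu_{BB}-\mu_{RR}$ up to the plateau $\mu_{BB}$ and back down to $0$. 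The hypothesis $\mu_B\geq\mu_R$ (giving $\mu_{BB}\geq\mu_{RR}$) is then used to show the curve for the $BR$ pairs is dominated by the curve for the $RB$ pairs, so all $\gamma_1 n$ non-monochromatic pairs can be charged against the single region $A_1$. Your observation that the normalization ``plays no role'' in your argument is a symptom of the problem: you have discarded the structure that the normalization is needed to control. To repair the proof you must retain the subtracted terms and carry out the extremal-configuration argument for the signed count.
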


\begin{figure}[ht]
\begin{center}
\begin{tikzpicture}[scale=0.7]
\def\xmin{-5.5}
\def\xmax{5.5}
\def\ymin{-4.5}
\def\ymax{4.5}
\draw[very thick, <->] (\xmin,\ymin+2.5)--(\xmax,\ymin+2.5) node[right,scale=1]{$y$ position};
\draw[very thick, <->] (\xmin+0.5,\ymin+2)--(\xmin+0.5,\ymax) node[above,scale=1]{pairs gained};
\draw (0,\ymin+2.5) node[below,scale=1] {$\frac{n}{4}$};
\draw (0,\ymin+2.4) -- (0,\ymin+2.6);
\draw (\xmax-0.5,\ymin+2.5) node[below,scale=1] {$\frac{n}{2}$};
\draw (\xmax-0.5,\ymin+2.4) -- (\xmax-0.5,\ymin+2.6);
\draw (\xmin+0.5,\ymax-1.5) node[left,scale=1] {$\mu_{BB}$};
\draw (\xmin+0.4,\ymax-1.5) -- (\xmin+0.6,\ymax-1.5);
\draw (\xmin+0.5,\ymax-2.9) node[left,scale=1] {$\mu_{RR}$};
\draw (\xmin+0.4,\ymax-2.9) -- (\xmin+0.6,\ymax-2.9);
\draw (\xmin+0.5,\ymax-5) node[left,scale=1] {$\mu_{BB}-\mu_{RR}$};
\draw (\xmin+0.4,\ymax-5) -- (\xmin+0.6,\ymax-5);
\draw (\xmin+1.5,\ymin+2.5) node[below,scale=0.7] {$\frac{\mu_{RR}}{2}$};
\draw (\xmin+1.5,\ymin+2.4) -- (\xmin+1.5,\ymin+2.6);
\draw (\xmin+4,\ymin+2.5) node[below,scale=0.7] {$\frac{n}{4}-\frac{\mu_{BB}}{2}$};
\draw (\xmin+4,\ymin+2.4) -- (\xmin+4,\ymin+2.6);
\draw[fill=lightgray] (\xmin+1.7,\ymin+2.5) rectangle (\xmin+3.8,\ymax-1.5);
\draw [|-|, thick] (\xmin+1.7,\ymin+2.7) -- (\xmin+3.8,\ymin+2.7);
\draw (\xmin+2.75,\ymin+2.7) node[above,scale=0.7] {$\gamma_1 n$};
\draw (\xmin+2.75,\ymin+5) node[scale=2]{$A_1$};
\draw[very thick] (\xmin+0.5,\ymax-5) -- (\xmin+1.5,\ymax-1.5) -- (\xmin+4,\ymax-1.5) -- (0,\ymin+2.5);
\draw[very thick, dashed] (\xmin+1.2,\ymin+2.5) -- (\xmin+2.2,\ymax-2.9) -- (\xmin+4.425,\ymax-2.9) -- (0,\ymin+2.5);
\end{tikzpicture}
\caption{The upper bound of $D_2$}
\label{fig:d2graphcombined}
\end{center}
\end{figure}
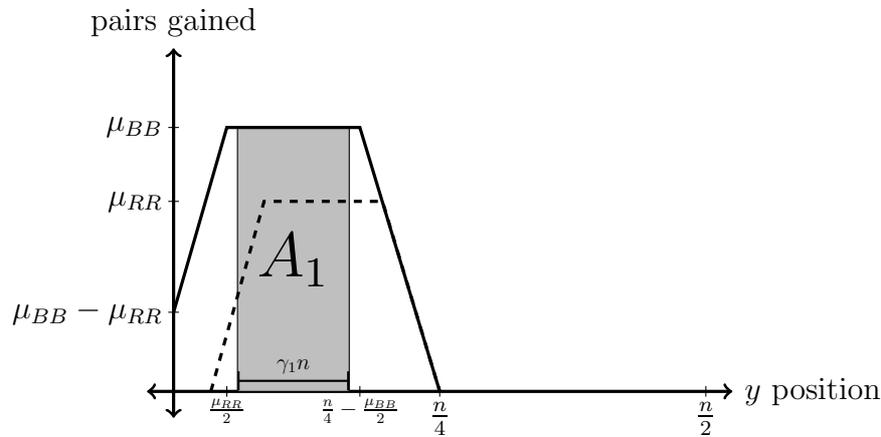

\begin{proof}
\noindent From Lemma \ref{D2lemma},
\begin{align*}
D_2 &= 2\mu_{RR}\otimes\mu_{BR}^{(1)}+2\mu_{BB}\otimes\mu_{RB}^{(1)}
-2\mu_{RR}\otimes\mu_{RB}^{(1)}-2\mu_{BB}\otimes\mu_{BR}^{(1)}\notag\\
{}&= 2(\mu_{BB}\otimes\mu_{RB}^{(1)}-\mu_{RR}\otimes\mu_{RB}^{(1)})
+2(\mu_{RR}\otimes\mu_{BR}^{(1)}-\mu_{BB}\otimes\mu_{BR}^{(1)}).
\end{align*}
\noindent We configure $X$ to gain the maximum of $\mu_{BB}\otimes\mu_{RB}^{(1)}-\mu_{RR}\otimes\mu_{RB}^{(1)}$ by coloring the far left of the interval $\left[1,\frac{n}{2}\right]$ red and the remainder of the interval blue, which is justified by the condition $2y<x$ . With this set up for $X$, we can count the number of pairs gained for every $y$ in $Y_1$ as shown in Figure \ref{fig:A1}.
\vspace{0.2cm}

\noindent Similarly, the configuration of X to gain the maximum of
$\mu_{RR}\otimes\mu_{BR}^{(1)}-\mu_{BB}\otimes\mu_{BR}^{(1)}$
is when we color the far left of the interval $\left[1,\frac{n}{2}\right]$ blue and the remainder of the interval red. With this set up for $X$, we can count the number of pairs gained for every $y$ in $Y_1$ as shown in Figure \ref{fig:A2}.
\vspace{0.2cm}

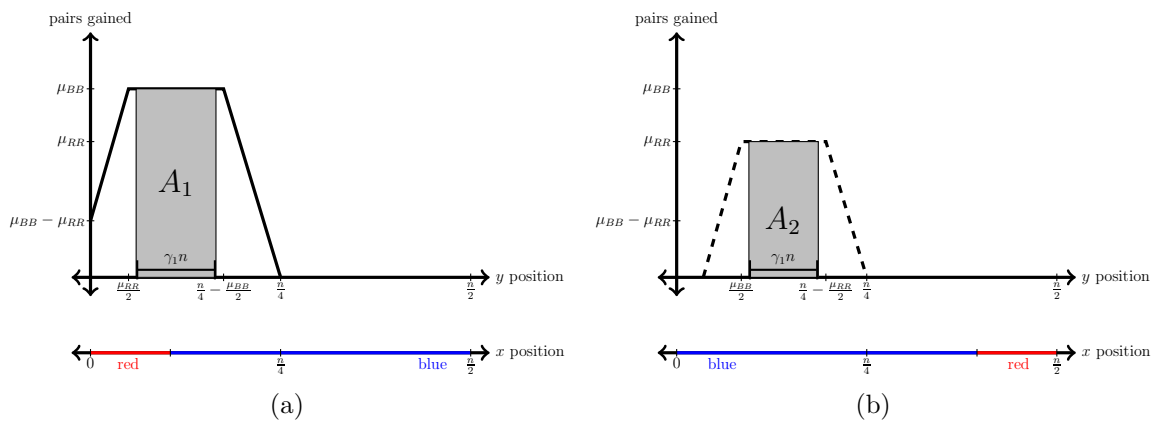
\begin{figure}[ht]

\subfloat[\label{fig:A1}]{
\begin{tikzpicture}[scale=0.5]
\def\xmin{-5.5}
\def\xmax{5.5}
\def\ymin{-4.5}
\def\ymax{4.5}
\def\scale{0.5}
\draw[very thick, <->] (\xmin,\ymin+2.5)--(\xmax,\ymin+2.5) node[right,scale=\scale]{$y$ position};
\draw[very thick, <->] (\xmin,\ymin+0.5)--(\xmax,\ymin+0.5) node[right,scale=\scale]{$x$ position};
\draw[very thick, red] (\xmin+0.5,\ymin+0.5)--(\xmin+2.6,\ymin+0.5);
\draw[very thick, blue] (\xmin+2.6,\ymin+0.5)--(\xmax-0.5,\ymin+0.5);
\draw[very thick, <->] (\xmin+0.5,\ymin+2)--(\xmin+0.5,\ymax) node[above,scale=\scale]{pairs gained};
\draw (\xmin+1.5,\ymin+2.5) node[below,scale=\scale] {$\frac{\mu_{RR}}{2}$};
\draw (\xmin+4,\ymin+2.5) node[below,scale=\scale] {$\frac{n}{4}-\frac{\mu_{BB}}{2}$};
\draw (0,\ymin+2.5) node[below,scale=\scale] {$\frac{n}{4}$};
\draw (\xmax-0.5,\ymin+2.5) node[below,scale=\scale] {$\frac{n}{2}$};
\draw (\xmin+1.5,\ymin+2.4) -- (\xmin+1.5,\ymin+2.6);
\draw (\xmin+4,\ymin+2.4) -- (\xmin+4,\ymin+2.6);
\draw (0,\ymin+2.4) -- (0,\ymin+2.6);
\draw (\xmax-0.5,\ymin+2.4) -- (\xmax-0.5,\ymin+2.6);
\draw (\xmin+0.5,\ymin+0.5) node[below,scale=\scale] {$0$};
\draw (\xmin+1.5,\ymin+0.5) node[below,red,scale=\scale] {red};
\draw (0,\ymin+0.5) node[below,scale=\scale] {$\frac{n}{4}$};
\draw (\xmax-1.5,\ymin+0.5) node[below,blue,scale=\scale] {blue};
\draw (\xmax-0.5,\ymin+0.5) node[below,scale=\scale] {$\frac{n}{2}$};
\draw (\xmin+0.5,\ymin+0.4) -- (\xmin+0.5,\ymin+0.6);
\draw (\xmin+2.6,\ymin+0.4) -- (\xmin+2.6,\ymin+0.6);
\draw (0,\ymin+0.4) -- (0,\ymin+0.6);
\draw (\xmax-0.5,\ymin+0.4) -- (\xmax-0.5,\ymin+0.6);
\draw (\xmin+0.5,\ymax-1.5) node[left,scale=\scale] {$\mu_{BB}$};
\draw (\xmin+0.4,\ymax-1.5) -- (\xmin+0.6,\ymax-1.5);
\draw (\xmin+0.5,\ymax-2.9) node[left,scale=\scale] {$\mu_{RR}$};
\draw (\xmin+0.4,\ymax-2.9) -- (\xmin+0.6,\ymax-2.9);
\draw (\xmin+0.5,\ymax-5) node[left,scale=\scale] {$\mu_{BB}-\mu_{RR}$};
\draw (\xmin+0.4,\ymax-5) -- (\xmin+0.6,\ymax-5);
\draw[very thick] (\xmin+0.5,\ymax-5) -- (\xmin+1.5,\ymax-1.5) -- (\xmin+4,\ymax-1.5) -- (0,\ymin+2.5);
\draw[fill=lightgray] (\xmin+1.7,\ymin+2.5) rectangle (\xmin+3.8,\ymax-1.5);
\draw [|-|, thick] (\xmin+1.7,\ymin+2.7) -- (\xmin+3.8,\ymin+2.7);
\draw (\xmin+2.75,\ymin+2.7) node[above,scale=\scale] {$\gamma_1 n$};
\draw (\xmin+2.75,\ymin+5) node[scale=\scale+\scale]{$A_1$};
\end{tikzpicture}
}%
\subfloat[\label{fig:A2}]{
\begin{tikzpicture}[scale=0.5]
\def\xmin{-5.5}
\def\xmax{5.5}
\def\ymin{-4.5}
\def\ymax{4.5}
\def\scale{0.5}
\draw[very thick, <->] (\xmin,\ymin+2.5)--(\xmax,\ymin+2.5) node[right,scale=\scale]{$y$ position};
\draw[very thick, <->] (\xmin,\ymin+0.5)--(\xmax,\ymin+0.5) node[right,scale=\scale]{$x$ position};
\draw[very thick, blue] (\xmin+0.5,\ymin+0.5)--(\xmax-2.6,\ymin+0.5);
\draw[very thick, red] (\xmax-2.6,\ymin+0.5)--(\xmax-0.5,\ymin+0.5);
\draw[very thick, <->] (\xmin+0.5,\ymin+2)--(\xmin+0.5,\ymax) node[above,scale=\scale]{pairs gained};
\draw (\xmin+2.2,\ymin+2.5) node[below,scale=\scale] {$\frac{\mu_{BB}}{2}$};
\draw (\xmin+2.2,\ymin+2.4) -- (\xmin+2.2,\ymin+2.6);
\draw (\xmin+4.425,\ymin+2.5) node[below,scale=\scale] {$\frac{n}{4}-\frac{\mu_{RR}}{2}$};
\draw (\xmin+4.425,\ymin+2.4) -- (\xmin+4.425,\ymin+2.6);
\draw (0,\ymin+2.5) node[below,scale=\scale] {$\frac{n}{4}$};
\draw (0,\ymin+2.4) -- (0,\ymin+2.6);
\draw (\xmax-0.5,\ymin+2.5) node[below,scale=\scale] {$\frac{n}{2}$};
\draw (\xmax-0.5,\ymin+2.4) -- (\xmax-0.5,\ymin+2.6);
\draw (\xmin+0.5,\ymin+0.5) node[below,scale=\scale] {$0$};
\draw (\xmin+1.7,\ymin+0.5) node[below,blue,scale=\scale] {blue};
\draw (0,\ymin+0.5) node[below,scale=\scale] {$\frac{n}{4}$};
\draw (\xmax-2.6,\ymin+0.4) -- (\xmax-2.6,\ymin+0.6);
\draw (\xmax-1.5,\ymin+0.5) node[below,red,scale=\scale] {red};
\draw (\xmax-0.5,\ymin+0.5) node[below,scale=\scale] {$\frac{n}{2}$};
\draw (\xmin+0.5,\ymin+0.4) -- (\xmin+0.5,\ymin+0.6);
\draw (0,\ymin+0.4) -- (0,\ymin+0.6);
\draw (\xmax-0.5,\ymin+0.4) -- (\xmax-0.5,\ymin+0.6);
\draw (\xmin+0.5,\ymax-1.5) node[left,scale=\scale] {$\mu_{BB}$};
\draw (\xmin+0.4,\ymax-1.5) -- (\xmin+0.6,\ymax-1.5);
\draw (\xmin+0.5,\ymax-2.9) node[left,scale=\scale] {$\mu_{RR}$};
\draw (\xmin+0.4,\ymax-2.9) -- (\xmin+0.6,\ymax-2.9);
\draw (\xmin+0.5,\ymax-5) node[left,scale=\scale] {$\mu_{BB}-\mu_{RR}$};
\draw (\xmin+0.4,\ymax-5) -- (\xmin+0.6,\ymax-5);
\draw[very thick, dashed] (\xmin+1.2,\ymin+2.5) -- (\xmin+2.2,\ymax-2.9) -- (\xmin+4.425,\ymax-2.9) -- (0,\ymin+2.5);
\draw[fill=lightgray] (\xmin+2.4,\ymin+2.5) rectangle (\xmin+4.225,\ymax-2.9);
\draw [|-|, thick] (\xmin+2.4,\ymin+2.7) -- (\xmin+4.225,\ymin+2.7);
\draw (\xmin+3.3,\ymin+2.7) node[above,scale=\scale] {$\gamma_1 n$};
\draw (\xmin+3.3,\ymin+4) node[scale=\scale+\scale]{$A_2$};
\end{tikzpicture}
}%

\caption{Counting $D_2$ with different colorings of $X$.}

\end{figure}

\noindent Finally, since $\mu_{RB}^{(1)}+\mu_{BR}^{(1)}=\gamma_1 n,$
and the number of pairs gained shown in Figure \ref{fig:A2} is clearly dominated by the number of pairs gained in Figure \ref{fig:A1}, the result follows.
\end{proof}

\noindent We now find the upper bound for $|\mathcal{N}(n)|.$

\begin{proposition}\label{N2}
Over all 2-colorings of $[1,n]$, the maximum number of non-monochro-matic triples satisfying $x+2y=z$ is
\begin{equation*} 
|\mathcal{N}(n)| \leq \frac{5n^2}{22}+\mathcal{O}(n).\\
\end{equation*}
\end{proposition}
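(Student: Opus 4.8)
The plan is to substitute the bound on $D_2$ furnished by Proposition \ref{d2bound} into the estimate for $|\mathcal{N}(n)|$ coming from Proposition \ref{nonmono2}, and then to maximize the resulting quantity over all admissible colorings. Writing $D_2 = |N_x^-| - |N_y^+|$, Proposition \ref{nonmono2} gives
\[
|\mathcal{N}(n)| \le \frac12\left(\frac{\mu_R\mu_B}{2} + \mu_R\mu_{B_1} + \mu_B\mu_{R_1} + D_2\right) + \mathcal{O}(n),
\]
and Proposition \ref{d2bound} lets us replace $D_2$ by $2A_1$. Since this expression is monotone increasing in $D_2$ and in each product, proving the proposition reduces to showing that the maximum over all colorings of
\[
F := \frac{\mu_R\mu_B}{2} + \mu_R\mu_{B_1} + \mu_B\mu_{R_1} + 2A_1
\]
is $\frac{5n^2}{11} + \mathcal{O}(n)$, so that $|\mathcal{N}(n)| \le \frac12 F \le \frac{5n^2}{22} + \mathcal{O}(n)$.

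The first step is to turn $A_1$ into an explicit function of the coloring statistics by reading off the areas in Figure \ref{fig:d2graphcombined}. The graph is a rising segment, a plateau of height $\mu_{BB}$ and width $\frac{\gamma n}{2}$, and a falling segment; both sloped sides have slope of absolute value $2$, since the rise has height $\mu_{RR}$ over width $\frac{\mu_{RR}}{2}$ and the fall has height $\mu_{BB}$ over width $\frac{\mu_{BB}}{2}$. Using $\mu_{BB} = \frac{\mu_B - \gamma n}{2}$ and $\mu_{RR} = \frac{\mu_R - \gamma n}{2}$ — so that the height $\mu_{BB}-\mu_{RR} = \frac{\mu_B-\mu_R}{2}$ at the left endpoint $y=0$ is fixed once $\mu_B$ is fixed — I would compute $A_1$ as the largest area of a window of width $\gamma_1 n$ under this unimodal graph. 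Because the two slopes have equal magnitude, the optimal window either sits on the plateau (when $\gamma_1 n \le \frac{\gamma n}{2}$) or spreads into the two sloped sides, and it is clipped against the boundary $y=0$ once the left extension would run past the short rising side. Each alternative yields a piecewise-quadratic formula for $A_1$ in terms of $\mu_B$, $\gamma$, and $\gamma_1$.

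The second step is the optimization. For fixed $\mu_B$ and $\mu_{B_1}$ I would first maximize $A_1$ over the internal parameters $\gamma$ and $\gamma_1$, subject to $\gamma n \le \mu_R$ and $\gamma_1 n \le \min(\mu_{B_1},\mu_{R_1})$, and then maximize $F$ over $\mu_B \ge \mu_R$ and over $\mu_{B_1}$ in its feasible range $\max(0,\mu_B-\tfrac n2) \le \mu_{B_1} \le \tfrac n2$. Each stage is a calculus problem: set the partial derivatives to zero, solve the resulting linear system, and compare the interior critical value against the values on the boundary faces of the feasible region. I expect the maximizer to be $\mu_B = \frac{7n}{11}$, $\mu_{B_1} = \frac{5n}{22}$, corresponding to the coloring $[R^{3n/11}, B^{7n/11}, R^{n/11}]$, for which the optimal window abuts $y=0$, $D_2 = 2A_1 = \frac{10n^2}{121}$, and $F = \frac{5n^2}{11}$, giving exactly $|\mathcal{N}(n)| \le \frac{5n^2}{22} + \mathcal{O}(n)$; this explicit coloring simultaneously supplies the matching construction.

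The hard part will be the bookkeeping in the optimization. Unlike the symmetric Schur case of Lemma \ref{Dlemma}, here $A_1$ is only piecewise-quadratic, so the parameter domain splits into several regions according to which branch of $A_1$ is active (plateau-only versus boundary-clipped window) and which feasibility constraints are tight. The main obstacle is to verify that the candidate optimum lies in the expected branch and dominates every competing critical point and every boundary case, rather than being undercut by a maximum living on a different branch. Checking the optimized value against the explicit coloring $[R^{3n/11}, B^{7n/11}, R^{n/11}]$ is the most reliable safeguard that no branch of the piecewise analysis has been overlooked.
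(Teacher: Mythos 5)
Your overall strategy is the one the paper follows: feed the bound $D_2\leq 2A_1$ of Proposition \ref{d2bound} into Proposition \ref{nonmono2}, read $A_1$ off Figure \ref{fig:d2graphcombined} as a piecewise-quadratic function of $\mu_B,\gamma,\gamma_1$ (your description of the slopes of magnitude $2$, the plateau of height $\mu_{BB}$ and width $\tfrac{\gamma n}{2}$, and the clipping at the left boundary matches the paper's Cases 1--3), and then optimize; your target maximizer $\mu_R=\tfrac{4n}{11}$, $\mu_{R_1}=\tfrac{3n}{11}$ with $2A_1=\tfrac{10n^2}{121}$ is also the one the paper finds. The genuine gap is in the constraint set for the optimization. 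You impose only $\gamma n\leq\mu_R$ and $\gamma_1 n\leq\min(\mu_{R_1},\mu_{B_1})$. But every branch of $A_1$ is \emph{decreasing} in $\gamma n$, so the optimizer will push $\gamma n$ to $0$, and nothing in your setup forbids that. The paper's proof depends essentially on the additional lower bound $\gamma n\geq|\mu_{R_1}-\mu_{R_2}|$, which holds because each pair $\{s,n+1-s\}$ has one element in each half of $[1,n]$, so $\mu_{R_1}-\mu_{R_2}=\mu_{RB}-\mu_{BR}$ and hence $\gamma n=\mu_{RB}+\mu_{BR}\geq|\mu_{R_1}-\mu_{R_2}|$. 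This is the coupling between $\gamma$ and the half-interval color counts that your proposal never introduces.

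Without it the optimization does not close. Concretely, take $\mu_{R_1}=\mu_{B_1}=\tfrac n4$, $\mu_R=\tfrac n3$, $\gamma_1 n=\tfrac n4$, and (illegally) $\gamma n=0$: this sits in your boundary-clipped branch and gives $A_1=\tfrac{n^2}{16}-\tfrac{\mu_R^2}{8}=\tfrac{7n^2}{144}$, hence $F=\tfrac{n^2}{9}+\tfrac{n^2}{12}+\tfrac{n^2}{6}+\tfrac{7n^2}{72}=\tfrac{11n^2}{24}>\tfrac{5n^2}{11}$, i.e.\ an upper bound of $\tfrac{11n^2}{48}$ rather than $\tfrac{5n^2}{22}$ for $|\mathcal N(n)|$. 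Once you impose $\gamma n\geq|\mu_{R_1}-\mu_{R_2}|=\tfrac n6$, the same point only yields $F\leq\tfrac{65n^2}{144}<\tfrac{5n^2}{11}$, as it must. Note also that your proposed safeguard of checking against the coloring $\left[R^{3n/11},B^{7n/11},R^{n/11}\right]$ cannot detect this: the explicit coloring certifies only that the bound cannot be better than $\tfrac{5n^2}{22}$, not that your relaxed optimization actually attains that value. You need to add the lower bound on $\gamma n$ (and then carry it through the case analysis, as the paper does with its subcases A--D) for the argument to produce $\tfrac{5n^2}{22}$.
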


\begin{proof}

Assume, without loss of generality, that $\mu_B\geq\mu_R$.  The proof ultimately depends on determining the area under the curve of Figure \ref{fig:d2graphcombined}, which we break down into three cases according to the value of $\gamma_1$. The three cases are illustrated in Figure \ref{fig:3cases}.

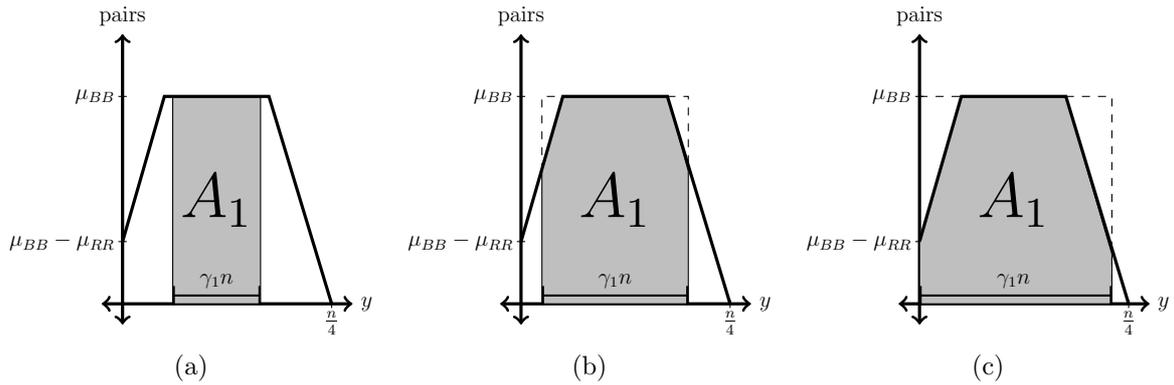
\begin{figure}[ht]
\begin{center}
\subfloat[\label{fig:case1area}]{
\begin{tikzpicture}[scale=0.55]
\def\xmin{-5.5}
\def\xmax{5.5}
\def\ymin{-4.5}
\def\ymax{4.5}
\def\scale{0.7}
\draw[very thick, <->] (\xmin,\ymin+2.5)--(0.5,\ymin+2.5) node[right,scale=\scale]{$y$};
\draw[very thick, <->] (\xmin+0.5,\ymin+2)--(\xmin+0.5,\ymax) node[above,scale=\scale]{pairs};
\draw (0,\ymin+2.5) node[below,scale=\scale] {$\frac{n}{4}$};
\draw (0,\ymin+2.4) -- (0,\ymin+2.6);
\draw (\xmin+0.5,\ymax-1.5) node[left,scale=\scale] {$\mu_{BB}$};
\draw (\xmin+0.4,\ymax-1.5) -- (\xmin+0.6,\ymax-1.5);
\draw (\xmin+0.5,\ymax-5) node[left,scale=\scale] {$\mu_{BB}-\mu_{RR}$};
\draw (\xmin+0.4,\ymax-5) -- (\xmin+0.6,\ymax-5);
\draw[fill=lightgray] (\xmin+1.7,\ymin+2.5) rectangle (\xmin+3.8,\ymax-1.5);
\draw [|-|, thick] (\xmin+1.7,\ymin+2.7) -- (\xmin+3.8,\ymin+2.7);
\draw (\xmin+2.75,\ymin+2.7) node[above,scale=0.7] {$\gamma_1 n$};
\draw (\xmin+2.75,\ymin+5) node[scale=2]{$A_1$};
\draw[very thick] (\xmin+0.5,\ymax-5) -- (\xmin+1.5,\ymax-1.5) -- (\xmin+4,\ymax-1.5) -- (0,\ymin+2.5);
\end{tikzpicture}
}%
\subfloat[\label{fig:case2area}]{
\begin{tikzpicture}[scale=0.55]
\def\xmin{-5.5}
\def\xmax{5.5}
\def\ymin{-4.5}
\def\ymax{4.5}
\def\scale{0.7}
\draw[very thick, <->] (\xmin,\ymin+2.5)--(0.5,\ymin+2.5) node[right,scale=\scale]{$y$};
\draw[very thick, <->] (\xmin+0.5,\ymin+2)--(\xmin+0.5,\ymax) node[above,scale=\scale]{pairs};
\draw (0,\ymin+2.5) node[below,scale=\scale] {$\frac{n}{4}$};
\draw (0,\ymin+2.4) -- (0,\ymin+2.6);
\draw (\xmin+0.5,\ymax-1.5) node[left,scale=\scale] {$\mu_{BB}$};
\draw (\xmin+0.4,\ymax-1.5) -- (\xmin+0.6,\ymax-1.5);
\draw (\xmin+0.5,\ymax-5) node[left,scale=\scale] {$\mu_{BB}-\mu_{RR}$};
\draw (\xmin+0.4,\ymax-5) -- (\xmin+0.6,\ymax-5);
\draw[black, fill=lightgray] (\xmin+1,\ymin+2.5) -- (\xmin+1,1.2) -- (\xmin+1.5,\ymax-1.5) -- (\xmin+4,\ymax-1.5) -- (\xmin+4.5,1.2) -- (\xmin+4.5,\ymin+2.5) -- (\xmin+1,\ymin+2.5);
\draw [|-|, thick] (\xmin+1,\ymin+2.7) -- (\xmin+4.5,\ymin+2.7);
\draw (\xmin+2.75,\ymin+2.7) node[above,scale=0.7] {$\gamma_1 n$};
\draw (\xmin+2.75,\ymin+5) node[scale=2]{$A_1$};
\draw[very thick] (\xmin+0.5,\ymax-5) -- (\xmin+1.5,\ymax-1.5) -- (\xmin+4,\ymax-1.5) -- (0,\ymin+2.5);
\draw[dashed] (\xmin+1,1.2) -- (\xmin+1, \ymax-1.5) -- (\xmin+1.5,\ymax-1.5);
\draw[dashed] (\xmin+4,\ymax-1.5) -- (\xmin+4.5,\ymax-1.5) -- (\xmin+4.5,1.2);
\end{tikzpicture}
}%
\subfloat[\label{fig:case3area}]{
\begin{tikzpicture}[scale=0.55]
\def\xmin{-5.5}
\def\xmax{5.5}
\def\ymin{-4.5}
\def\ymax{4.5}
\def\scale{0.7}
\draw[very thick, <->] (\xmin,\ymin+2.5)--(0.5,\ymin+2.5) node[right,scale=\scale]{$y$};
\draw[very thick, <->] (\xmin+0.5,\ymin+2)--(\xmin+0.5,\ymax) node[above,scale=\scale]{pairs};
\draw (0,\ymin+2.5) node[below,scale=\scale] {$\frac{n}{4}$};
\draw (0,\ymin+2.4) -- (0,\ymin+2.6);
\draw (\xmin+0.5,\ymax-1.5) node[left,scale=\scale] {$\mu_{BB}$};
\draw (\xmin+0.4,\ymax-1.5) -- (\xmin+0.6,\ymax-1.5);
\draw (\xmin+0.5,\ymax-5) node[left,scale=\scale] {$\mu_{BB}-\mu_{RR}$};
\draw (\xmin+0.4,\ymax-5) -- (\xmin+0.6,\ymax-5);
\draw[black, fill=lightgray] (\xmin+0.5,\ymin+2.5) -- (\xmin+0.5,\ymax-5) -- (\xmin+1.5,\ymax-1.5) -- (\xmin+4,\ymax-1.5) -- (\xmin+5.1,-0.8) -- (\xmin+5.1,\ymin+2.5) -- (\xmin+0.5,\ymin+2.5);
\draw [|-|, thick] (\xmin+0.5,\ymin+2.7) -- (\xmin+5.1,\ymin+2.7);
\draw (\xmin+2.75,\ymin+2.7) node[above,scale=0.7] {$\gamma_1 n$};
\draw (\xmin+2.75,\ymin+5) node[scale=2]{$A_1$};
\draw[very thick] (\xmin+0.5,\ymax-5) -- (\xmin+1.5,\ymax-1.5) -- (\xmin+4,\ymax-1.5) -- (0,\ymin+2.5);
\draw[dashed] (\xmin+0.5,\ymax-1.5) -- (\xmin+1.5,\ymax-1.5);
\draw[dashed] (\xmin+4,\ymax-1.5) -- (\xmin+5.1,\ymax-1.5) -- (\xmin+5.1,\ymin+2.5);
\end{tikzpicture}
}
\caption{The Three Cases of Proposition \ref{N2}}
\label{fig:3cases}
\end{center}
\end{figure}

\noindent We complete the proof of this proposition as follows. For each case, we write $A_1$ in terms of the variables $\mu_R, \mu_B, \gamma$ and $\gamma_1.$ Then, we optimize $\gamma, \gamma_1$ with respect to $\mu_{B_1}, \mu_{R_1}$ and $\mu_{R_2}$.  Finally, we use the optimal $\gamma$ and $\gamma_1$ values to maximize 
\[ \Delta := \dfrac{1}{2}\left( \dfrac{\mu_R\mu_B}{2}+\mu_R\mu_{B_1}
+\mu_B\mu_{R_1}+2A_1 \right). \]

\noindent Denote this maximum to be $\Delta_{max}$.  Propositions \ref{nonmono2} and \ref{d2bound} show that $\Delta_{max}$ will be the upper bound for $|\mathcal{N}(n)|$. The optimization of $\Delta$ has been done using Maple and for curious readers, the code can be found at Thanatipanonda's website. We note that $\Delta$ can ultimately be written as a function of only two variables $\mu_R$ and $\mu_{R_1}$. In our calculations, we use the following lower bound of $\gamma n$ and upper bound of $\gamma_1 n$:
\begin{align*}
\gamma_1 n&\leq \mbox{ min}(\mu_{R_1}, \mu_{B_1})\\
\gamma n&\geq |\mu_{R_1}-\mu_{R_2}|
\end{align*}

\noindent Maple's current technology does not allow us to optimize with absolute value and minimum functions.  Thus, we separate each case into the following pieces.  The subcases are summarized in the table in Figure \ref{fig:global}.  It is important to note that subcase D can be ignored in our calculation because it only produces one pair, namely $\mu_{R_1}=\frac{n}{4}$ and $\mu_{R}=\frac{n}{2}$ (recall that $\mu_R \leq \frac{n}{2}$).
\vspace{0.5cm}

\begin{figure}[ht]
\begin{centering}
\adjustbox{max width=\linewidth, keepaspectratio}{
\begin{tabular}{|c|c|}
\hline
Subcase & Conditions on $\mu_{R_1}, \mu_{B_1}$ and $\mu_{R_2}$ \\
\hline
A &  $\mu_{B_1}\leq\mu_{R_1}$ and $\mu_{R_1}\geq \mu_{R_2}$\\
\hline
B &  $\mu_{B_1} \geq \mu_{R_1}$ and $\mu_{R_1}\geq \mu_{R_2}$ \\
\hline
C &  $\mu_{B_1} \geq \mu_{R_1}$ and $\mu_{R_1}\leq \mu_{R_2}$ \\
\hline
D &  $\mu_{B_1}\leq \mu_{R_1}$ and $\mu_{R_1}\leq \mu_{R_2}$\\
\hline
\end{tabular}}
\caption{The Four Subcases}
\label{fig:global}
\end{centering}
\end{figure}

\noindent\textbf{Case 1:} $\gamma_1 n<\dfrac{\gamma n}{2}$.  This case is illustrated in Figure \ref{fig:case1area}.
\begin{align*}
A_1&=\mu_{BB}\cdot\gamma_1 n\\
&=\frac{\mu_B-\gamma n}{2}\cdot\gamma_1 n.
\end{align*}

\noindent In order to maximize $A_1$, we maximize $\gamma_1 n$ and minimize $\gamma n$.  To be able to determine the values of $\gamma_1$ and $\gamma$, we consider two further subcases as follows:
\vspace{0.4cm}

\noindent \textbf{Case 1.1:} $\mbox{min}\left(\mu_{R_1},\mu_{B_1}\right)<\frac{|\mu_{R_1}-\mu_{R_2}|}{2}$
\vspace{0.2cm}

\noindent The optimal values of $(\gamma_1 n, \gamma n)$ is 
$(\mbox{min}\left(\mu_{R_1},\mu_{B_1}\right) , |\mu_{R_1}-\mu_{R_2}|)$ as shown in Figure \ref{fig:case1.1graph}.
\vspace{0.2cm}

\noindent \textbf{Case 1.2:} $\mbox{min}\left(\mu_{R_1},\mu_{B_1}\right)\geq\frac{|\mu_{R_1}-\mu_{R_2}|}{2}$
\vspace{0.2cm}

\noindent The optimal values of $(\gamma_1 n, \gamma n)$ lies on the line $\gamma n = 2\gamma_1 n$  as shown in Figure \ref{fig:case1.2graph}. This gives 
\[ A_1 =\frac{\left(\mu_B-\gamma n\right)\cdot\gamma n}{4}.\]
Thus, $A_1$ attains its maximum value at $\gamma n =\frac{\mu_B}{2}$.
\vspace{0.4cm}

\noindent The calculations for $\Delta_{max}$ for all subcases are summarized in the table in Figure \ref{fig:caseonetable} and the admissible regions for subcases A, B, and C are shown in Figure \ref{fig:caseonegraph}.  In this case, $\Delta_{max}$ occurs under subcase A.
\vspace{0.5cm}

\begin{figure}[ht]
\begin{center}
\subfloat[\label{fig:case1.1graph}]{
\begin{tikzpicture}[scale=0.5]
\def\xmin{-4.5}
\def\xmax{4.5}
\def\ymin{-4.5}
\def\ymax{4.5}
\draw[very thick, <->] (\xmin,\ymin+0.5)--(\xmax,\ymin+0.5) node[right,scale=1]{$\gamma_1 n$};
\draw[very thick, <->] (\xmin+0.5,\ymin)--(\xmin+0.5,\ymax) node[above,scale=1]{$\gamma n$};
\draw (\xmin+0.5,\ymax-0.5) node[left,scale=1] {$\frac{n}{2}$};
\draw (\xmin+0.4,\ymax-0.5) -- (\xmin+0.6,\ymax-0.5);
\draw (\xmin+0.5,1.5) node[left,scale=1] {$\mu_R$};
\draw (\xmin+0.4,1.5) -- (\xmin+0.6,1.5);
\draw (\xmin+0.5,-1) node[left,scale=1] {$|\mu_{R_1}-\mu_{R_2}|$};
\draw (\xmin+0.4,-1) -- (\xmin+0.6,-1);
\draw (\xmin+1.5,\ymin+0.5) node[below,scale=1] {min$(\mu_{R_1},\mu_{B_1})$};
\draw (\xmin+1.5,\ymin+0.4) -- (\xmin+1.5,\ymin+0.6);
\draw[very thick] (\xmin+0.5,\ymin+0.5) -- (0,\ymax-0.5);
\draw (0,\ymax-0.5) node[right,scale=1] {$\gamma n=2\gamma_1 n$};
\draw[fill=lightgray] (\xmin+0.5,-1) rectangle (\xmin+1.5,1.5);
\draw[fill] (\xmin+1.5,-1) circle [radius=0.1];
\draw[dashed] (\xmin+1.5,-1) -- (\xmin+1.5,\ymin+0.5);
\end{tikzpicture}
}%
\subfloat[\label{fig:case1.2graph}]{
\begin{tikzpicture}[scale=0.5]
\def\xmin{-4.5}
\def\xmax{4.5}
\def\ymin{-4.5}
\def\ymax{4.5}
\draw[very thick, <->] (\xmin,\ymin+0.5)--(\xmax,\ymin+0.5) node[right,scale=1]{$\gamma_1 n$};
\draw[very thick, <->] (\xmin+0.5,\ymin)--(\xmin+0.5,\ymax) node[above,scale=1]{$\gamma n$};
\draw (\xmin+0.5,\ymax-0.5) node[left,scale=1] {$\frac{n}{2}$};
\draw (\xmin+0.4,\ymax-0.5) -- (\xmin+0.6,\ymax-0.5);
\draw (\xmin+0.5,1.5) node[left,scale=1] {$\mu_R$};
\draw (\xmin+0.4,1.5) -- (\xmin+0.6,1.5);
\draw (\xmin+0.5,-1) node[left,scale=1] {$|\mu_{R_1}-\mu_{R_2}|$};
\draw (\xmin+0.4,-1) -- (\xmin+0.6,-1);
\draw (\xmin+2.7,\ymin+0.5) node[below,scale=1] {min$(\mu_{R_1},\mu_{B_1})$};
\draw (\xmin+2.7,\ymin+0.4) -- (\xmin+2.7,\ymin+0.6);
\draw[very thick] (\xmin+0.5,\ymin+0.5) -- (0,\ymax-0.5);
\draw (0,\ymax-0.5) node[right,scale=1] {$\gamma n=2\gamma_1 n$};
\draw[black, fill=lightgray] (\xmin+0.5,-1) -- (\xmin+0.5,1.5) -- (\xmin+2.7,1.5) -- (\xmin+2.7,0.45) -- (\xmin+1.98,-1) -- (\xmin+0.5,-1);
\draw[fill] (\xmin+2.7,0.45) circle [radius=0.1];
\draw[fill] (\xmin+1.98,-1) circle [radius=0.1];
\draw[dashed] (\xmin+2.7,0.45) -- (\xmin+2.7,\ymin+0.5);
\end{tikzpicture}
}%
\caption{Finding the optimal values of $\gamma_1 n$ and $\gamma n$ in Case 1}
\end{center}
\end{figure}
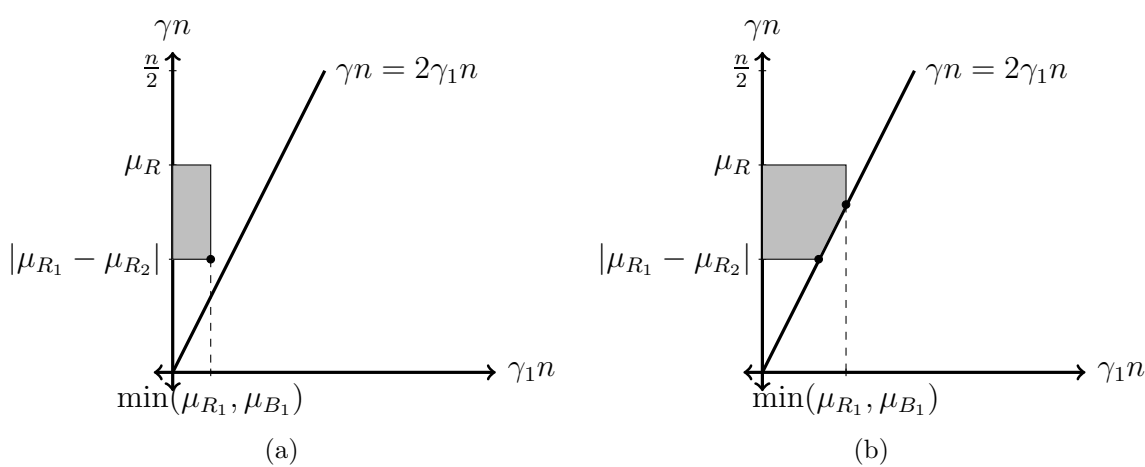

\begin{figure}[ht]
\begin{centering}
\adjustbox{max width=\linewidth, keepaspectratio}{
\begin{tabular}{|c|c|c|c|c|c|}
\hline
Case & Subcase & Optimal $\gamma_1 n$& Optimal $\gamma n$ & $\Delta_{max}$ & $(\mu_{R_1},\mu_R)$\\
\hline
&A&$\mu_{B_1}$&$\mu_{R_1}-\mu_{R_2}$&$\frac{2n^2}{9}$&$\left(\frac{n}{3},\frac{n}{3}\right)$\\
1.1&B&$\mu_{R_1}$&$\mu_{R_1}-\mu_{R_2}$&0&(0,0)\\
&C&$\mu_{R_1}$&$\mu_{R_2}-\mu_{R_1}$&$\frac{13n^2}{64}$&$\left(\frac{n}{8},\frac{n}{2}\right)$\\
\hline
&A&$ \frac{\mu_B}{4}$&$\frac{\mu_B}{2}$&$\frac{2n^2}{9}$&$\left(\frac{n}{3},\frac{n}{3}\right)$\\
1.2&B&$  \frac{\mu_B}{4}$&$\frac{\mu_B}{2}$&$\frac{5n^2}{24}$&$\left(\frac{n}{4},\frac{n}{3}\right)$\\
&C&$  \frac{\mu_B}{4}$&$\frac{\mu_B}{2}$&$\frac{9n^2}{44}$&$\left(\frac{5n}{22},\frac{5n}{11}\right)$\\
\hline
\end{tabular}}
\caption{Results for Case 1}
\label{fig:caseonetable}
\end{centering}
\end{figure}

\newpage
\noindent\textbf{Case 2:} $\frac{\gamma n}{2} \leq \gamma_1 n<\frac{\mu_R}{2}$.  This case is illustrated in Figure \ref{fig:case2area}.
\vspace{0.3cm}

\noindent In this case, we take the area of the rectangle but subtract those pairs that are outside of the region.  Thus, we have that:
\begin{align*}
A_1 &=\mu_{BB}\cdot\gamma_1 n-\frac{1}{2}\left(\gamma_1 n - \frac{\gamma n}{2}\right)^2\\
&=\frac{(\mu_B-\gamma_1 n)\cdot\gamma_1 n}{2}-\frac{(\gamma n)^2}{8}.
\end{align*}

\noindent In order to maximize $A_1$, we want to make $\gamma_1 n$ as close to $\frac{\mu_B}{2}$ as possible and minimize $\gamma n$. We break this case down into subcases depending on whether or not the upper bound of $\gamma_1 n$ is less than $\frac{\mu_R}{2}$.
\vspace{0.4cm} 

\noindent \textbf{Case 2.1} $\mbox{min}\left(\mu_{R_1},\mu_{B_1}\right)<\frac{\mu_R}{2}$
\vspace{0.2cm}

\noindent The optimal value of $(\gamma_1 n, \gamma n)$ is 
$(\mbox{min}\left(\mu_{R_1},\mu_{B_1}\right) , |\mu_{R_1}-\mu_{R_2}|)$.
\vspace{0.2cm}

\noindent \textbf{Case 2.2} $\mbox{min}\left(\mu_{R_1},\mu_{B_1}\right)\geq \frac{\mu_R}{2}$
\vspace{0.2cm}

\noindent The optimal value of $(\gamma_1 n, \gamma n)$ is  
$(\frac{\mu_R}{2} , |\mu_{R_1}-\mu_{R_2}|)$.
\vspace{0.4cm}

\noindent The calculations for $\Delta_{max}$ for all subcases are summarized in the table in Figure \ref{fig:casetwotable} and the admissible regions for subcases A, B, and C are shown in Figure \ref{fig:casetwograph}.  In this case, $\Delta_{max}$ occurs under subcase A.
\vspace{0.5cm}

\begin{figure}[ht]
\begin{centering}
\adjustbox{max width=\linewidth, keepaspectratio}{
\begin{tabular}{|c|c|c|c|c|c|}
\hline
Case & Subcase & Optimal $\gamma_1 n$& Optimal $\gamma n$ & $\Delta_{max}$ & $(\mu_{R_1},\mu_R)$\\
\hline
&A&$\mu_{B_1}$&$\mu_{R_1}-\mu_{R_2}$&$\frac{9n^2}{40}$&$\left(\frac{3n}{10},\frac{2n}{5}\right)$\\
2.1&B&$\mu_{R_1}$&$\mu_{R_1}-\mu_{R_2}$&$\frac{2n^2}{9}$&$\left(\frac{2n}{9},\frac{4n}{9}\right)$\\
&C&$\mu_{R_1}$&$\mu_{R_2}-\mu_{R_1}$&$\frac{2n^2}{9}$&$\left(\frac{2n}{9},\frac{4n}{9}\right)$\\
\hline
&A&$\frac{\mu_R}{2}$&$\mu_{R_1}-\mu_{R_2}$&$\frac{9n^2}{40}$&$\left(\frac{3n}{10},\frac{2n}{5}\right)$\\
2.2&B&$\frac{\mu_R}{2}$&$\mu_{R_1}-\mu_{R_2}$&$\frac{43n^2}{192}$&$\left(\frac{n}{4},\frac{5n}{12}\right)$\\
&C&$\frac{\mu_R}{2}$&$\mu_{R_2}-\mu_{R_1}$&$\frac{2n^2}{9}$&$\left(\frac{2n}{9},\frac{4n}{9}\right)$\\
\hline
\end{tabular}}
\caption{Results for Case 2}
\label{fig:casetwotable}
\end{centering}
\end{figure}

\noindent\textbf{Case 3:} $\frac{\mu_R}{2}<\gamma_1 n$. This case is illustrated in Figure \ref{fig:case3area}.
\vspace{0.3cm}

\noindent In this final case, $A_1$ indicated by nearly the entire region under the graph. We take the area of the rectangle but subtract those pairs that are outside of the region.  Thus, we have that
\begin{align*}
A_1 &=\mu_{BB}\cdot\gamma_1 n-\left(\frac{\mu_{RR}}{2}\right)^2-\left(\gamma_1 n-\frac{\mu_{RR}}{2}-\frac{\gamma n}{2}\right)^2\\
&=\left(\frac{n}{2}-\gamma_1 n\right)\cdot\gamma_1 n-\frac{1}{4}\left((\gamma n+\mu_{RR})^2+\mu_{RR}^2\right)\\
&=\left(\frac{n}{2}-\gamma_1 n\right)\cdot\gamma_1 n-\frac{(\gamma n)^2}{8}-\frac{\mu_{R}^2}{8}.\\
\end{align*}

\noindent In order to maximize $A_1$, we want to make $\gamma_1 n$ as to close to $\frac{n}{4}$ as possible and minimize $\gamma n$. The calculations for $\Delta_{max}$ for all subcases are summarized in the table in Figure \ref{fig:casethreetable} and the admissible regions for subcases A, B, and C are shown in Figure \ref{fig:casethreegraph}.  Once again, $\Delta_{max}$ occurs under subcase A.
\vspace{0.5cm}

\begin{figure}[ht]
\begin{centering}
\adjustbox{max width=\linewidth, keepaspectratio}{
\begin{tabular}{|c|c|c|c|c|c|}
\hline
Case & Subcase & Optimal $\gamma_1 n$& Optimal $\gamma n$ & $\Delta_{max}$ & $(\mu_{R_1},\mu_R)$\\
\hline
&A&$\mu_{B_1}$&$\mu_{R_1}-\mu_{R_2}$&$\frac{5n^2}{22}$&$\left(\frac{3n}{11},\frac{4n}{11}\right)$\\
3&B&$\mu_{R_1}$&$\mu_{R_1}-\mu_{R_2}$&$\frac{29n^2}{128}$&$\left(\frac{n}{4},\frac{3n}{8}\right)$\\
&C&$\mu_{R_1}$&$\mu_{R_2}-\mu_{R_1}$&$\frac{2n^2}{9}$&$\left(\frac{2n}{9},\frac{4n}{9}\right)$\\
\hline
\end{tabular}}
\caption{Results for Case 3}
\label{fig:casethreetable}
\end{centering}
\end{figure}

\noindent In all three cases, we can see that $\Delta_{max}$ occurs in Case 3 when $\mu_{R_1}=\frac{3n}{11}$ and $\mu_R=\frac{4n}{11}$ with $\Delta_{max} = \frac{5n^2}{22}+\mathcal{O}(n)$.

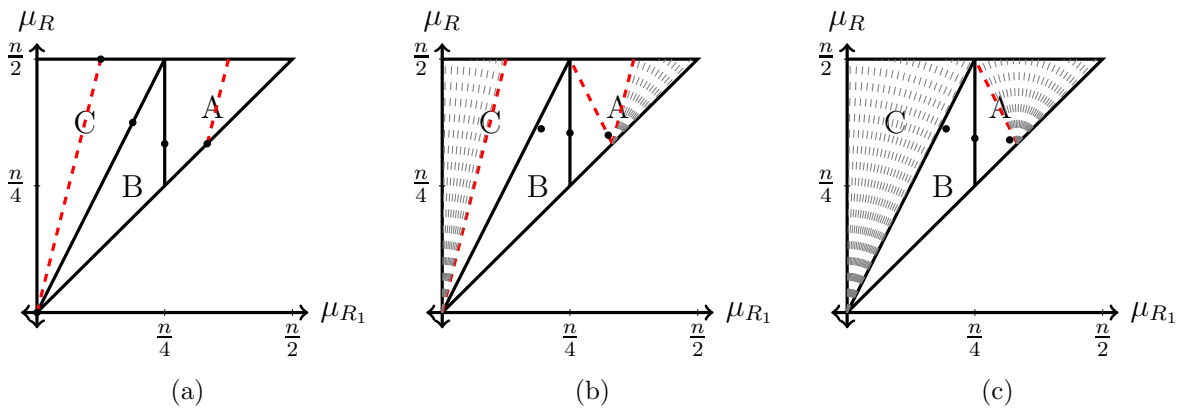
\begin{figure}[ht]
\begin{center}
\subfloat[\label{fig:caseonegraph}]{
\begin{tikzpicture}[scale=0.42]
\def\xmin{-4.5}
\def\xmax{4.5}
\def\ymin{-4.5}
\def\ymax{4.5}
\draw[very thick, <->] (\xmin,\ymin+0.5)--(\xmax,\ymin+0.5) node[right,scale=1]{$\mu_{R_1}$};
\draw[very thick, <->] (\xmin+0.5,\ymin)--(\xmin+0.5,\ymax) node[above,scale=1]{$\mu_{R}$};
\draw (0,\ymin+0.5) node[below,scale=1] {$\frac{n}{4}$};
\draw (0,\ymin+0.4) -- (0,\ymin+0.6);
\draw (\xmax-0.5,\ymin+0.5) node[below,scale=1] {$\frac{n}{2}$};
\draw (\xmax-0.5,\ymin+0.4) -- (\xmax-0.5,\ymin+0.6);
\draw (\xmin+0.5,0) node[left,scale=1] {$\frac{n}{4}$};
\draw (\xmin+0.4,0) -- (\xmin+0.6,0);
\draw (\xmin+0.5,\ymax-0.5) node[left,scale=1] {$\frac{n}{2}$};
\draw (\xmin+0.4,\ymax-0.5) -- (\xmin+0.6,\ymax-0.5);
\draw[very thick] (\xmin+0.5,\ymin+0.5) -- (\xmax-0.5,\ymax-0.5) -- (\xmin+0.5,\ymax-0.5);
\draw[very thick] (0,0) -- (0,\ymax-0.5);
\draw[very thick] (\xmin+0.5,\ymin+0.5) -- (0,\ymax-0.5);
\draw (\xmax-3,\ymax-2) node[scale=1]{A};
\draw (\xmin+3.5,\ymin+4.5) node[scale=1]{B};
\draw (\xmin+2,\ymax-2.5) node[scale=1]{C};
\draw[very thick, red, dashed] (\xmin+0.5,\ymin+0.5) -- (\xmin+2.5,\ymax-0.5);
\draw[very thick, red, dashed] (1.33,1.33) -- (2,\ymax-0.5);
\draw[fill] (\xmin+0.5,\ymin+0.5) circle [radius=0.1];
\draw[fill] (-2,\ymax-0.5) circle [radius=0.1];
\draw[fill] (-1,2) circle [radius=0.1];
\draw[fill] (0,1.33) circle [radius=0.1];
\draw[fill] (1.33,1.33) circle [radius=0.1];
\end{tikzpicture}
}%
\subfloat[\label{fig:casetwograph}]{
\begin{tikzpicture}[scale=0.42]
\def\xmin{-4.5}
\def\xmax{4.5}
\def\ymin{-4.5}
\def\ymax{4.5}
\draw[very thick, <->] (\xmin,\ymin+0.5)--(\xmax,\ymin+0.5) node[right,scale=1]{$\mu_{R_1}$};
\draw[very thick, <->] (\xmin+0.5,\ymin)--(\xmin+0.5,\ymax) node[above,scale=1]{$\mu_{R}$};
\draw (0,\ymin+0.5) node[below,scale=1] {$\frac{n}{4}$};
\draw (0,\ymin+0.4) -- (0,\ymin+0.6);
\draw (\xmax-0.5,\ymin+0.5) node[below,scale=1] {$\frac{n}{2}$};
\draw (\xmax-0.5,\ymin+0.4) -- (\xmax-0.5,\ymin+0.6);
\draw (\xmin+0.5,0) node[left,scale=1] {$\frac{n}{4}$};
\draw (\xmin+0.4,0) -- (\xmin+0.6,0);
\draw (\xmin+0.5,\ymax-0.5) node[left,scale=1] {$\frac{n}{2}$};
\draw (\xmin+0.4,\ymax-0.5) -- (\xmin+0.6,\ymax-0.5);
\draw[very thick] (\xmin+0.5,\ymin+0.5) -- (\xmax-0.5,\ymax-0.5) -- (\xmin+0.5,\ymax-0.5);
\draw[very thick] (0,0) -- (0,\ymax-0.5);
\draw[very thick] (\xmin+0.5,\ymin+0.5) -- (0,\ymax-0.5);
\draw (\xmax-3,\ymax-2) node[scale=1]{A};
\draw (\xmin+3.5,\ymin+4.5) node[scale=1]{B};
\draw (\xmin+2,\ymax-2.5) node[scale=1]{C};
\draw[very thick, red, dashed] (\xmin+0.5,\ymin+0.5) -- (\xmin+2.5,\ymax-0.5);
\draw[very thick, red, dashed] (1.33,1.33) -- (2,\ymax-0.5);
\draw[very thick, red, dashed] (1.33,1.33) -- (0,\ymax-0.5);
\draw[gray, dashed] (\xmin+0.5,\ymin+0.5) -- (\xmin+0.6,\ymax-0.5);
\draw[gray, dashed] (\xmin+0.5,\ymin+0.5) -- (\xmin+0.8,\ymax-0.5);
\draw[gray, dashed] (\xmin+0.5,\ymin+0.5) -- (\xmin+1.0,\ymax-0.5);
\draw[gray, dashed] (\xmin+0.5,\ymin+0.5) -- (\xmin+1.2,\ymax-0.5);
\draw[gray, dashed] (\xmin+0.5,\ymin+0.5) -- (\xmin+1.4,\ymax-0.5);
\draw[gray, dashed] (\xmin+0.5,\ymin+0.5) -- (\xmin+1.6,\ymax-0.5);
\draw[gray, dashed] (\xmin+0.5,\ymin+0.5) -- (\xmin+1.8,\ymax-0.5);
\draw[gray, dashed] (\xmin+0.5,\ymin+0.5) -- (\xmin+2.0,\ymax-0.5);
\draw[gray, dashed] (\xmin+0.5,\ymin+0.5) -- (\xmin+2.2,\ymax-0.5);
\draw[gray, dashed] (\xmin+0.5,\ymin+0.5) -- (\xmin+2.4,\ymax-0.5);
\draw[gray, dashed] (1.33,1.33) -- (2.2,\ymax-0.5);
\draw[gray, dashed] (1.33,1.33) -- (2.4,\ymax-0.5);
\draw[gray, dashed] (1.33,1.33) -- (2.6,\ymax-0.5);
\draw[gray, dashed] (1.33,1.33) -- (2.8,\ymax-0.5);
\draw[gray, dashed] (1.33,1.33) -- (3.0,\ymax-0.5);
\draw[gray, dashed] (1.33,1.33) -- (3.2,\ymax-0.5);
\draw[gray, dashed] (1.33,1.33) -- (3.4,\ymax-0.5);
\draw[gray, dashed] (1.33,1.33) -- (3.6,\ymax-0.5);
\draw[gray, dashed] (1.33,1.33) -- (3.8,\ymax-0.5);
\draw[fill] (1.2,1.6) circle [radius=0.1];
\draw[fill] (-0.9,1.8) circle [radius=0.1];
\draw[fill] (0,1.67) circle [radius=0.1];
\end{tikzpicture}
}%
\subfloat[\label{fig:casethreegraph}]{
\begin{tikzpicture}[scale=0.42]
\def\xmin{-4.5}
\def\xmax{4.5}
\def\ymin{-4.5}
\def\ymax{4.5}
\draw[very thick, <->] (\xmin,\ymin+0.5)--(\xmax,\ymin+0.5) node[right,scale=1]{$\mu_{R_1}$};
\draw[very thick, <->] (\xmin+0.5,\ymin)--(\xmin+0.5,\ymax) node[above,scale=1]{$\mu_{R}$};
\draw (0,\ymin+0.5) node[below,scale=1] {$\frac{n}{4}$};
\draw (0,\ymin+0.4) -- (0,\ymin+0.6);
\draw (\xmax-0.5,\ymin+0.5) node[below,scale=1] {$\frac{n}{2}$};
\draw (\xmax-0.5,\ymin+0.4) -- (\xmax-0.5,\ymin+0.6);
\draw (\xmin+0.5,0) node[left,scale=1] {$\frac{n}{4}$};
\draw (\xmin+0.4,0) -- (\xmin+0.6,0);
\draw (\xmin+0.5,\ymax-0.5) node[left,scale=1] {$\frac{n}{2}$};
\draw (\xmin+0.4,\ymax-0.5) -- (\xmin+0.6,\ymax-0.5);
\draw[very thick] (\xmin+0.5,\ymin+0.5) -- (\xmax-0.5,\ymax-0.5) -- (\xmin+0.5,\ymax-0.5);
\draw[very thick] (0,0) -- (0,\ymax-0.5);
\draw[very thick] (\xmin+0.5,\ymin+0.5) -- (0,\ymax-0.5);
\draw (\xmax-3.7,\ymax-2) node[scale=1]{A};
\draw (\xmin+3.5,\ymin+4.5) node[scale=1]{B};
\draw (\xmin+2,\ymax-2.5) node[scale=1]{C};
\draw[very thick, red, dashed] (1.33,1.33) -- (0,\ymax-0.5);
\draw[gray, dashed] (\xmin+0.5,\ymin+0.5) -- (\xmin+0.6,\ymax-0.5);
\draw[gray, dashed] (\xmin+0.5,\ymin+0.5) -- (\xmin+0.8,\ymax-0.5);
\draw[gray, dashed] (\xmin+0.5,\ymin+0.5) -- (\xmin+1.0,\ymax-0.5);
\draw[gray, dashed] (\xmin+0.5,\ymin+0.5) -- (\xmin+1.2,\ymax-0.5);
\draw[gray, dashed] (\xmin+0.5,\ymin+0.5) -- (\xmin+1.4,\ymax-0.5);
\draw[gray, dashed] (\xmin+0.5,\ymin+0.5) -- (\xmin+1.6,\ymax-0.5);
\draw[gray, dashed] (\xmin+0.5,\ymin+0.5) -- (\xmin+1.8,\ymax-0.5);
\draw[gray, dashed] (\xmin+0.5,\ymin+0.5) -- (\xmin+2.0,\ymax-0.5);
\draw[gray, dashed] (\xmin+0.5,\ymin+0.5) -- (\xmin+2.2,\ymax-0.5);
\draw[gray, dashed] (\xmin+0.5,\ymin+0.5) -- (\xmin+2.4,\ymax-0.5);
\draw[gray, dashed] (\xmin+0.5,\ymin+0.5) -- (\xmin+2.6,\ymax-0.5);
\draw[gray, dashed] (\xmin+0.5,\ymin+0.5) -- (\xmin+2.8,\ymax-0.5);
\draw[gray, dashed] (\xmin+0.5,\ymin+0.5) -- (\xmin+3.0,\ymax-0.5);\draw[gray, dashed] (\xmin+0.5,\ymin+0.5) -- (\xmin+3.2,\ymax-0.5);\draw[gray, dashed] (\xmin+0.5,\ymin+0.5) -- (\xmin+3.4,\ymax-0.5);\draw[gray, dashed] (\xmin+0.5,\ymin+0.5) -- (\xmin+3.6,\ymax-0.5);\draw[gray, dashed] (\xmin+0.5,\ymin+0.5) -- (\xmin+3.8,\ymax-0.5);
\draw[gray, dashed] (\xmin+0.5,\ymin+0.5) -- (\xmin+4.0,\ymax-0.5);
\draw[gray, dashed] (\xmin+0.5,\ymin+0.5) -- (\xmin+4.2,\ymax-0.5);
\draw[gray, dashed] (\xmin+0.5,\ymin+0.5) -- (\xmin+4.4,\ymax-0.5);
\draw[gray, dashed] (1.33,1.33) -- (0.2,\ymax-0.5);
\draw[gray, dashed] (1.33,1.33) -- (0.4,\ymax-0.5);
\draw[gray, dashed] (1.33,1.33) -- (0.6,\ymax-0.5);
\draw[gray, dashed] (1.33,1.33) -- (0.8,\ymax-0.5);
\draw[gray, dashed] (1.33,1.33) -- (1.0,\ymax-0.5);
\draw[gray, dashed] (1.33,1.33) -- (1.2,\ymax-0.5);
\draw[gray, dashed] (1.33,1.33) -- (1.4,\ymax-0.5);
\draw[gray, dashed] (1.33,1.33) -- (1.6,\ymax-0.5);
\draw[gray, dashed] (1.33,1.33) -- (1.8,\ymax-0.5);
\draw[gray, dashed] (1.33,1.33) -- (2.0,\ymax-0.5);
\draw[gray, dashed] (1.33,1.33) -- (2.2,\ymax-0.5);
\draw[gray, dashed] (1.33,1.33) -- (2.4,\ymax-0.5);
\draw[gray, dashed] (1.33,1.33) -- (2.6,\ymax-0.5);
\draw[gray, dashed] (1.33,1.33) -- (2.8,\ymax-0.5);
\draw[gray, dashed] (1.33,1.33) -- (3.0,\ymax-0.5);
\draw[gray, dashed] (1.33,1.33) -- (3.2,\ymax-0.5);
\draw[gray, dashed] (1.33,1.33) -- (3.4,\ymax-0.5);
\draw[gray, dashed] (1.33,1.33) -- (3.6,\ymax-0.5);
\draw[gray, dashed] (1.33,1.33) -- (3.8,\ymax-0.5);
\draw[fill] (1.09,1.45) circle [radius=0.1];
\draw[fill] (-0.9,1.8) circle [radius=0.1];
\draw[fill] (0,1.5) circle [radius=0.1];
\end{tikzpicture}
}
\end{center}
\caption{Admissible Regions of Cases in Proposition \ref{N2}}
\end{figure}

\end{proof}

\noindent And now, we are ready to present the main result of this paper.
\vspace{0.2cm}

\begin{theorem}
Over all 2-colorings of $[1,n]$, the minimum number of monochromatic triples satisfying $x+2y=z$ is $\frac{n^2}{44}+\mathcal{O}(n)$.
\end{theorem}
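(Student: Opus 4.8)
The plan is to sandwich $|\mathcal{M}(n)|$ between matching lower and upper bounds, exactly as in the proof of Theorem \ref{theorem1}. The lower bound will fall out almost immediately from Proposition \ref{N2}, so the real work is producing an explicit coloring that realizes the extremal parameters isolated during the optimization and checking that it attains the bound.

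First I would record the analogue of Lemma \ref{countschurtriples}: the total number of triples $(x,y,x+2y)$ in $[1,n]$ is $\frac{n^2}{4}+\mathcal{O}(n)$. Each such triple is determined by the pair $(x,y)$ subject to $x\ge 1$, $y\ge 1$, and $x+2y\le n$, and summing $n-2y$ over $1\le y\le (n-1)/2$ gives $\frac{n^2}{4}+\mathcal{O}(n)$. Combining this with the partition $|\mathcal{M}(n)|+|\mathcal{N}(n)|=\frac{n^2}{4}+\mathcal{O}(n)$ and the upper bound $|\mathcal{N}(n)|\le \frac{5n^2}{22}+\mathcal{O}(n)$ furnished by Proposition \ref{N2}, I obtain at once
\[ |\mathcal{M}(n)| \ge \frac{n^2}{4}-\frac{5n^2}{22}+\mathcal{O}(n)=\frac{n^2}{44}+\mathcal{O}(n), \]
which is the desired lower bound on the minimum.

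For the upper bound I would reverse-engineer a coloring from the extremal data found in the proof of Proposition \ref{N2}, namely Case 3 / subcase A, where $\mu_{R_1}=\frac{3n}{11}$ and $\mu_R=\frac{4n}{11}$, and hence $\gamma_1 n=\mu_{B_1}=\frac{5n}{22}$ and $\gamma n=\mu_{R_1}-\mu_{R_2}=\frac{2n}{11}$. Translating these constraints back through Definitions \ref{halfpair} and the reflection pairings (the condition $\gamma_1 n=\mu_{B_1}$ forces every blue point of $\left[1,\frac{n}{2}\right]$ to reflect through $\frac{n}{4}$ onto a red one, and $\gamma n=\mu_{R_1}-\mu_{R_2}$ fixes the placement of the red block on the right half) pins down, up to lower-order terms and a global color swap, the block coloring
\[ \left[B^{5n/22},\, R^{8n/22},\, B^{9n/22}\right]. \]
I would then verify directly that this coloring yields $\frac{5n^2}{22}+\mathcal{O}(n)$ non-monochromatic triples, equivalently $\frac{n^2}{44}+\mathcal{O}(n)$ monochromatic ones, by confirming that it meets each inequality in Propositions \ref{nonmono2} and \ref{d2bound} with equality to leading order; in particular that cases 3 and 4 of Figure \ref{fig:smallchart} never occur and that the estimate $\nu_3\le \frac{\mu_R\mu_B}{2}$ is tight for this arrangement.

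The main obstacle is this last verification. Unlike the symmetric Schur case, three distinct pairing structures interact here, the reflection of $X$ through $\frac{n}{2}$, the reflection of $Y_1$ through $\frac{n}{4}$, and the divisibility pairing governing $\nu_3$, so I must confirm that the single coloring above simultaneously saturates all of them rather than just one. Once the block proportions are shown to realize $\mu_{R_1}=\frac{3n}{11}$, $\mu_R=\frac{4n}{11}$ with $\gamma$ and $\gamma_1$ at their optimal values, the upper and lower bounds coincide and the theorem follows.
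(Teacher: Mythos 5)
Your lower bound is exactly the paper's argument: the total count $\frac{n^2}{4}+\mathcal{O}(n)$ of triples $(x,y,x+2y)$ together with Proposition \ref{N2} (this is the content of Lemma \ref{M2}) gives $|\mathcal{M}(n)|\geq \frac{n^2}{44}+\mathcal{O}(n)$ immediately. The gap is in your upper bound. The extremal parameters $\mu_{R_1}=\frac{3n}{11}$, $\mu_R=\frac{4n}{11}$, $\gamma_1 n=\frac{5n}{22}$, $\gamma n=\frac{2n}{11}$ do \emph{not} pin down a coloring up to a global color swap, and the block coloring you reverse-engineer, $\left[B^{5n/22},R^{8n/22},B^{9n/22}\right]$, does not attain the bound. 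A direct count (normalize to the triangle $u,v>0$, $u+2v\leq 1$, with red the middle interval $\left(\frac{5}{22},\frac{13}{22}\right]$) shows there are no red monochromatic triples, but the blue ones fill three regions of total area $\frac{25+4+81}{1936}=\frac{5}{88}$, so your coloring has $\frac{5n^2}{88}+\mathcal{O}(n)$ monochromatic triples --- more than twice $\frac{n^2}{44}$. The dominant contribution is the region $u>\frac{13}{22}$, $u+2v\leq 1$: there $x$ lies in the long right blue block, $y$ is small and hence blue, and $z=x+2y$ stays in the right blue block. Your coloring genuinely does realize all four extremal parameters (one can check $\gamma_1 n=\mu_{B_1}=\frac{5n}{22}$ and $\gamma n=\frac{2n}{11}$ for it), and that is precisely the point: those aggregate statistics only control the upper bound $\Delta$ on $|\mathcal{N}(n)|$, and whether a particular coloring saturates the inequalities of Propositions \ref{nonmono2} and \ref{d2bound} depends on finer structural information than they record, so ``meeting the parameters'' is not the same as ``meeting the bound.''

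The coloring that works is $\left[R^{3n/11},B^{7n/11},R^{n/11}\right]$ --- minority color split into two end blocks, the short one of length $\frac{n}{11}$ on the right --- as given in \cite{butler} and \cite{aek}; the same normalized count yields red area $\frac{9}{484}+\frac{1}{484}$ and blue area $\frac{1}{484}$, totalling $\frac{11}{484}=\frac{1}{44}$. Substituting this coloring for your candidate (and then carrying out the verification step you describe) repairs the proof; the rest of your outline is sound and matches the paper's route.
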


\begin{proof}

\noindent An upper bound of the minimum can be obtained from a coloring on $[1,n]$.  We color $\left[R^{3n/11},B^{7n/11},R^{n/11}\right]$ as illustrated in Figure \ref{fig:optimalcoloring2}. This solution was discovered in Butler, Costello, and Graham \cite{butler} and in Thanathipanonda \cite{aek}.
\vspace{0.5cm}

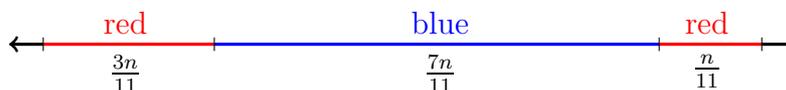
\begin{figure}[ht]
\begin{center}
\begin{tikzpicture}[scale=0.9]
\def\xmin{-0.5}
\def\xmax{11.5}
\def\ymin{-1}
\def\ymax{1}
\draw[very thick, <-] (\xmin,\ymin+0.5)--(\xmin+0.5,\ymin+0.5);
\draw[very thick, red, -] (\xmin+0.5,\ymin+0.5)--(\xmin+3,\ymin+0.5);
\draw[very thick, blue, -] (\xmin+3,\ymin+0.5)--(\xmin+9.5,\ymin+0.5);
\draw[very thick, red, -] (\xmin+9.5,\ymin+0.5)--(\xmin+11,\ymin+0.5);
\draw[very thick, ->] (\xmin+11,\ymin+0.5)--(\xmin+11.5,\ymin+0.5);
\draw (\xmin+0.5,\ymin+0.4) -- (\xmin+0.5,\ymin+0.6);
\draw (\xmin+1.7,\ymin+0.5) node[below,scale=1] {$\frac{3n}{11}$};
\draw[red] (\xmin+1.7,\ymin+0.5) node[above,scale=1] {red};
\draw (\xmin+3,\ymin+0.4) -- (\xmin+3,\ymin+0.6);
\draw (\xmin+6.3,\ymin+0.5) node[below,scale=1] {$\frac{7n}{11}$};
\draw[blue] (\xmin+6.3,\ymin+0.5) node[above,scale=1] {blue};
\draw (\xmin+9.5,\ymin+0.4) -- (\xmin+9.5,\ymin+0.6);
\draw (\xmin+10.2,\ymin+0.5) node[below,scale=1] {$\frac{n}{11}$};
\draw[red] (\xmin+10.2,\ymin+0.5) node[above,scale=1] {red};
\draw (\xmin+11,\ymin+0.4) -- (\xmin+11,\ymin+0.6);
\end{tikzpicture}
\caption{The Optimal Coloring for $x+2y=z$}
\label{fig:optimalcoloring2}
\end{center}
\end{figure}

\noindent This coloring gives us $\frac{n^2}{44}+\mathcal{O}(n)$ monochromatic triples.
\vspace{0.2cm}

\noindent Next, we look for the lower bound of the minimum.  From Lemma \ref{M2} and Proposition \ref{N2}, we immediately get that
\begin{align*}
|\mathcal{M}(n)| &\geq \frac{n^2}{4}- \dfrac{1}{2}\left( \dfrac{\mu_R\mu_B}{2}+\mu_R\mu_{B_1}
+\mu_B\mu_{R_1}+|N_x^-|-|N_y^+| \right)+\mathcal{O}(n) \\
&\geq \frac{n^2}{4}-\frac{5n^2}{22}+\mathcal{O}(n)\\
&= \frac{n^2}{44}+\mathcal{O}(n).
\end{align*}

\noindent Because the lower and upper bounds match, we have therefore shown the desired result.

\end{proof}

\section{The General Case \texorpdfstring{$x+ay=z, \;\ a \geq 2$}{}}

We now generalize our result.

\begin{theorem} \label{limitbreak}
Over all 2-colorings of $[1,n]$, the minimum number of monochromatic triples satisfying $x+ay=z, \, a\geq 2$ is $\frac{n^2}{2a(a^2+2a+3)}+\mathcal{O}(n)$.
\end{theorem}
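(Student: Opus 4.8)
The plan is to run the entire development of Section~\ref{a=2} with the fixed integer $a$ in place of $2$, the only genuinely $a$-dependent ingredient being the slope of the counting curve of Figure~\ref{fig:d2graphcombined}. I would first record the elementary count that replaces the constant $\tfrac{n^2}{4}$: the lattice region $\{(x,y):x\ge 1,\,y\ge 1,\,x+ay\le n\}$ contains $\tfrac{n^2}{2a}+\mathcal{O}(n)$ points, so the total number of triples is $\tfrac{n^2}{2a}+\mathcal{O}(n)$ and $|\mathcal{M}(n)|=\tfrac{n^2}{2a}-|\mathcal{N}(n)|$. The problem therefore reduces, exactly as before, to maximizing $|\mathcal{N}(n)|$, and the target is to prove the analogue of Proposition~\ref{N2}, namely $|\mathcal{N}(n)|\le \tfrac{n^2}{2a}\cdot\tfrac{a^2+2a+2}{a^2+2a+3}+\mathcal{O}(n)$; subtracting this from $\tfrac{n^2}{2a}$ gives precisely $\tfrac{n^2}{2a(a^2+2a+3)}+\mathcal{O}(n)$ (one checks this collapses to $\tfrac{5n^2}{22}$ and $\tfrac{n^2}{44}$ when $a=2$).

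For the lower bound on $|\mathcal{M}(n)|$ I would carry the chain Proposition~\ref{nonmono2} $\to$ Lemma~\ref{M2} $\to$ Lemma~\ref{D2lemma} $\to$ Proposition~\ref{d2bound} over to general $a$. Now $y$ ranges over $[1,\tfrac{n}{a}]$; the four regions $N_x^{\pm},N_y^{\pm}$ are defined verbatim with $x+2y$ replaced by $x+ay$ and $x\gtrless 2y$ by $x\gtrless ay$, and the three families of non-monochromatic pairs $\nu_1,\nu_2,\nu_3$ are counted as before, the divisibility family now obeying $\nu_3\le \tfrac{\mu_R\mu_B}{a}$ since $z\equiv x \pmod a$ (the Cauchy--Schwarz bound over the $a$ residue classes, Lemma~3 of \cite{aek}). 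This preserves the reduction to bounding $D_a:=|N_x^-|-|N_y^+|$, and with the reflection pairs $X=\{x,n+1-x\}$ on $[1,\tfrac n2]$ and $Y_1=\{y,\tfrac na+1-y\}$ on $[1,\tfrac na]$ subject to the admissibility condition $ay<x$, the identity of Lemma~\ref{D2lemma} holds with the $\otimes$ product constrained by $ay<x$.

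The decisive change is geometric. For a fixed colouring of $X$, the number of admissible $x$ with $ay<x\le \tfrac n2$ decreases in $y$ with slope $-a$, so the piecewise-linear ``pairs gained'' curve replacing Figure~\ref{fig:d2graphcombined} has slopes $\pm a$ rather than $\pm 2$; consequently a triangular correction of vertical height $h$ now contributes area $\tfrac{h^2}{2a}$, and the Case~3 constant $\tfrac18$ of Proposition~\ref{N2} becomes $\tfrac{1}{4a}$ (indeed $\tfrac{1}{4a}=\tfrac18$ at $a=2$). I would then re-run the three cases of Proposition~\ref{N2}, optimizing $\Delta$ (defined as there) over $\gamma,\gamma_1$ subject to the admissibility bound $\gamma_1 n\le \min(\mu_{R_1},\mu_{B_1})$ and the lower bound on $\gamma n$, and expect $\Delta_{max}$ to again occur in the analogue of Case~3, subcase~A. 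Reading the optimizer off that subcase gives $\mu_R,\mu_{R_1}$ as explicit rational multiples of $n$ (reducing to $\tfrac{4n}{11},\tfrac{3n}{11}$ at $a=2$), and the matching upper bound is the three-block colouring $[R^{c_1 n},B^{c_2 n},R^{c_3 n}]$ read off from these values (reducing to $[R^{3n/11},B^{7n/11},R^{n/11}]$), whose monochromatic count one verifies directly to be $\tfrac{n^2}{2a(a^2+2a+3)}+\mathcal{O}(n)$, as in \cite{butler,aek}.

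The main obstacle is twofold. First, for $a\ge 3$ the reflection centre $\tfrac n2$ used for $X$ no longer coincides with the domain split $\tfrac na$ governing $\mu_{R_1},\mu_{B_1}$, so the constraints tying $\gamma$ and $\gamma_1$ to the block counts (transparent when $\tfrac na=\tfrac n2$) must be re-derived, very likely with additional colour-count variables at the two scales $\tfrac n2$ and $\tfrac na$. Second, the optimization underlying Proposition~\ref{N2} was carried out case by case in Maple for a numerical $a$; here it must be done with $a$ symbolic, and the real work is to show \emph{uniformly in $a$} that none of the competing subcases (the analogues of the rows of Figures~\ref{fig:caseonetable}--\ref{fig:casethreetable}) ever overtakes Case~3/subcase~A. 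I would handle this by expressing $\Delta$ as a function of the two free variables $\mu_R,\mu_{R_1}$ with $a$ a parameter, verifying concavity on each admissible region, and comparing the finitely many candidate optima as rational functions of $a$; the bookkeeping across regions, rather than any single estimate, is the crux.
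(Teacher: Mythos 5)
Your proposal is essentially the paper's own proof: Section 4 of the paper carries the Section 3 machinery over to general $a$ exactly as you describe --- $\nu_3 \le \frac{\mu_R\mu_B}{a}$, the reflection pairs $X$ and $Y_1$ with the slope-$a$ ``pairs gained'' curve turning the $\frac18$ corrections into $\frac{1}{4a}$, the optimum occurring in Case 3/subcase A at $\mu_{R_1}=\frac{(a+1)n}{a^2+2a+3}$ and $\mu_R=\frac{(a+2)n}{a^2+2a+3}$, and the matching three-block coloring $[R,B,R]$ in ratio $\left[1,\,a+\frac{1}{a+1},\,\frac{1}{a+1}\right]$. The two obstacles you flag --- the mismatch between the reflection scale $\frac n2$ and the domain split $\frac na$ for $a\ge 3$, and the need to verify symbolically in $a$ that no competing subcase overtakes Case 3A --- are genuine, but they are present (and left implicit) in the paper's own treatment, which records only the Case 3A computation and asserts the rest follows as in the $a=2$ case.
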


\noindent The set up of this proof is similar to the set up in Section \ref{a=2}. We will outline it here.

\begin{definition}
Denote by $\mu_{B_1}$ and $\mu_{R_1}$ the number of blue and red colorings respectively on $\left[1,\frac{n}{a}\right]$.  Furthermore, denote by $\mu_{B_2}$ and $\mu_{R_2}$ the number of blue and red colorings respectively on $\left(\frac{n}{a},n\right]$.
\end{definition}

\begin{definition}
The sets of non-monochromatic pairs in $[1,n]\times[1,\frac{n}{a}]$ 
are defined as follows:
\begin{equation*}
\begin{split}
N_x^-=\left\{(x,y)|\, x+ay\leq n,x>ay\right\}\\
N_x^+=\left\{(x,y)| \, x+ay>n,x>ay\right\}\\
N_y^-=\left\{(x,y)| \, x+ay\leq n,x<ay\right\}\\
N_y^+=\left\{(x,y)| \, x+ay>n,x<ay\right\}
\end{split}
\end{equation*}

\begin{figure}[ht]
\begin{center}
\begin{tikzpicture}[scale=0.65]
\def\xmin{-4.5}
\def\xmax{4.5}
\def\ymin{-4.5}
\def\ymax{4.5}
\draw[very thick, <->] (\xmin,\ymin+0.5)--(5.5,\ymin+0.5) node[right,scale=1]{$x$};
\draw[very thick, <->] (\xmin+0.5,\ymin)--(\xmin+0.5, 0.3) node[above,scale=1]{$y$};
\draw (\xmax-0.5,\ymin+0.5) node[below,scale=1] {$n$};
\draw (\xmin+0.5,-1) node[left,scale=1] {$\frac{n}{a}$};
\draw[very thick] (\xmin+0.5,-1) -- (\xmax-0.5,-1);
\draw[very thick] (\xmin+0.5,\ymin+0.5) -- (\xmax-0.5,-1);
\draw[very thick] (\xmin+0.5,-1) -- (\xmax-0.5,\ymin+0.5);
\draw[very thick] (\xmax-0.5,\ymin+0.5) -- (\xmax-0.5,-1);
\draw (0,-1.8) node[scale=1]{$N_y^+$};
\draw (0,\ymin+1) node[scale=1]{$N_x^-$};
\draw (\xmin+2,\ymin+1.9) node[scale=1]{$N_y^-$};
\draw (\xmax-2,\ymin+1.9) node[scale=1]{$N_x^+$};
\draw[fill] (\xmin+2.5,\ymin+3.1) circle [radius=0.1];
\draw[fill] (\xmax-2.5,\ymin+3.1) circle [radius=0.1];
\draw[fill] (\xmin+2.5,\ymin+0.8) circle [radius=0.1];
\draw[fill] (\xmax-2.5,\ymin+0.8) circle [radius=0.1];
\end{tikzpicture}
\caption{The sets $N_x^-$, $N_x^+$ $N_y^-$ and $N_y^+$.}
\label{fig:graphxplusaywithdots}
\end{center}
\end{figure}
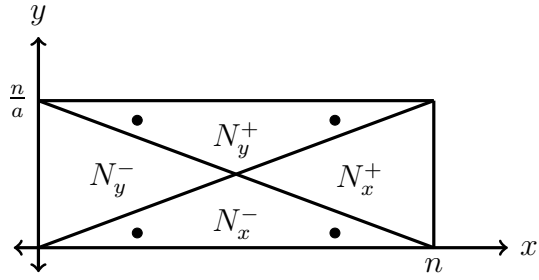

\end{definition}

\noindent It is now easy to adapt this notation to prove the following analog to Proposition \ref{nonmono2}.

\begin{proposition}\label{nonmonoa}
$|\mathcal{N}(n)| \leq \dfrac{1}{2}\left( \dfrac{\mu_R\mu_B}{a}+\mu_R\mu_{B_1}+\mu_B\mu_{R_1}
+|N_x^-|-|N_y^+| \right)+\mathcal{O}(n).$
\end{proposition}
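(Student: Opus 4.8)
The plan is to mirror the proof of Proposition~\ref{nonmono2} essentially verbatim, replacing the constant $2$ by the general parameter $a$ throughout. First I would count, for each non-monochromatic triple $(x,y,z)$ with $z = x+ay$, the non-monochromatic pairs it contributes. Since any triple colored with two colors splits as $2$--$1$, the minority-colored element lies in exactly two of the three pairs $(x,y)$, $(y,z)$, $(x,z)$, and precisely those two pairs are non-monochromatic; hence every non-monochromatic triple contributes exactly two non-monochromatic pairs and
\[
|\mathcal{N}(n)| = \frac{1}{2}(\nu_1 + \nu_2 + \nu_3),
\]
where $\nu_1,\nu_2,\nu_3$ count the non-monochromatic pairs of type $(x,y)$, $(y,z)$, $(x,z)$, respectively. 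Parametrizing each valid triple by one of its pairs and reading off Figure~\ref{fig:graphxplusaywithdots} gives $\nu_1 = |N_x^-| + |N_y^-| + \mathcal{O}(n)$ (the region $x+ay\leq n$, split by the line $x=ay$) and, after relabeling $z\mapsto x$, $\nu_2 = |N_x^-| + |N_x^+| + \mathcal{O}(n)$ (the region $x>ay$), exactly as for $a=2$.

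The main step, and the only place where the value of $a$ enters nontrivially, is the bound $\nu_3 \leq \frac{\mu_R\mu_B}{a}$, the general analog of Lemma~3 of Thanatipanonda~\cite{aek}. Here $\nu_3$ counts non-monochromatic pairs $(x,y)$ with $x<y$ and $a \mid (y-x)$, equivalently pairs of integers lying in the same residue class modulo $a$. Writing $\rho_r$ and $\beta_r$ for the number of red and blue integers in the residue class $r \pmod a$, one has $\nu_3 = \sum_{r=0}^{a-1}\rho_r\beta_r$. Since each residue class contains $\frac{n}{a}+\mathcal{O}(1)$ integers, I would substitute $\beta_r = \frac{n}{a}-\rho_r+\mathcal{O}(1)$ and compute
\[
\nu_3 = \frac{n}{a}\mu_R - \sum_{r=0}^{a-1}\rho_r^2 + \mathcal{O}(n) \leq \frac{n}{a}\mu_R - \frac{\mu_R^2}{a} + \mathcal{O}(n) = \frac{\mu_R\mu_B}{a}+\mathcal{O}(n),
\]
where the inequality is Cauchy--Schwarz applied to $\sum_r \rho_r^2 \geq \frac{1}{a}\bigl(\sum_r\rho_r\bigr)^2$ and the last equality uses $\mu_R+\mu_B = n$. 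The equal-class-size hypothesis is essential: without it $\sum_r\rho_r\beta_r$ could be as large as $\mu_R\mu_B$, so the uniform distribution of residues across $[1,n]$ is exactly what forces the factor $\frac{1}{a}$.

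Finally I would assemble the three estimates to obtain
\[
|\mathcal{N}(n)| \leq \frac{1}{2}\left(2|N_x^-| + |N_x^+| + |N_y^-| + \frac{\mu_R\mu_B}{a}\right) + \mathcal{O}(n),
\]
and then rewrite $2|N_x^-| + |N_x^+| + |N_y^-| = \bigl(|N_x^-|+|N_x^+|+|N_y^-|+|N_y^+|\bigr) + |N_x^-| - |N_y^+|$. The counting identity $|N_x^-|+|N_x^+|+|N_y^-|+|N_y^+| = \mu_R\mu_{B_1} + \mu_B\mu_{R_1} + \mathcal{O}(n)$, which tallies all non-monochromatic pairs in $[1,n]\times[1,\frac{n}{a}]$ and is read directly from Figure~\ref{fig:graphxplusaywithdots}, then yields
\[
|\mathcal{N}(n)| \leq \frac{1}{2}\left(\frac{\mu_R\mu_B}{a} + \mu_R\mu_{B_1} + \mu_B\mu_{R_1} + |N_x^-| - |N_y^+|\right) + \mathcal{O}(n),
\]
as claimed. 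I expect the regional bookkeeping to be entirely routine once the residue-class bound on $\nu_3$ is in hand; that bound, and the recognition that equal class sizes are what make it hold, is the crux of the argument.
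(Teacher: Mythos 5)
Your proposal is correct and follows the same route the paper intends: it mirrors the proof of Proposition~\ref{nonmono2} with $2$ replaced by $a$, which is exactly the adaptation the paper describes as ``easy'' and leaves unwritten. The only substantive addition is your self-contained residue-class/Cauchy--Schwarz proof of $\nu_3 \leq \frac{\mu_R\mu_B}{a}+\mathcal{O}(n)$ (where the paper, in the $a=2$ case, simply cites Lemma~3 of \cite{aek}), and that argument is sound.
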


\begin{definition}\label{1overapair}
Define by $S$ the set of pairs of the form  $\{s,\frac{n}{a}+1-s\}$ where $1 \leq s \leq \frac{n}{2a}.$
\end{definition}

\noindent The values of $D_a:=|N_x^-|-|N_y^+|$ can be bounded the same way as in the previous section.

\begin{proposition}\label{dabound}
Assume, without loss of generality, that $\mu_B \geq \mu_R$ and suppose the number of non-monochromatic pairs in S, $\gamma_1 n$, is fixed. Then
\[ D_a \leq   2 A_1,\]
where $A_1$ is the largest possible area that can be placed under the curve in Figure \ref{fig:dagraphcombined}, with a base of length $\gamma_1 n$ for $\gamma_1 \leq \frac{1}{2a}$.
\end{proposition}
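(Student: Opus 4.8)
The plan is to follow the proof of Proposition~\ref{d2bound} essentially verbatim, replacing the threshold condition $2y<x$ by $ay<x$ and the range $1\le y\le\frac{n}{4}$ by $1\le y\le\frac{n}{2a}$ throughout. The first task is to record the analog of Lemma~\ref{D2lemma}, namely the identity
\[
D_a = 2\mu_{RR}\otimes\mu_{BR}^{(1)}+2\mu_{BB}\otimes\mu_{RB}^{(1)}
-2\mu_{RR}\otimes\mu_{RB}^{(1)}-2\mu_{BB}\otimes\mu_{BR}^{(1)},
\]
where now the direct product $\otimes$ carries the condition $ay<x$ and the $Y_1$ pairs are the sets $\{y,\frac{n}{a}+1-y\}$ of Definition~\ref{1overapair}. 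To justify this, I would set $1\le ay<x\le\frac{n}{2}$ and observe that the ordered pairs of $X\times Y_1$ colored \emph{differently} land in $N_x^-\cup N_y^+$ exactly as before: $(x,y),(n+1-x,y)\in N_x^-$ while $\left(x,\frac{n}{a}+1-y\right),\left(n+1-x,\frac{n}{a}+1-y\right)\in N_y^+$. Rerunning the same sixteen-row case analysis as in Figure~\ref{fig:smallchart}, whose logical structure is insensitive to the value of $a$, shows that only the four rows with a monochromatic $X$ and a non-monochromatic $Y_1$ contribute, each by $\pm 2$, which gives the displayed identity.

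With the identity in hand, I would split
\[
D_a = 2\bigl(\mu_{BB}\otimes\mu_{RB}^{(1)}-\mu_{RR}\otimes\mu_{RB}^{(1)}\bigr)
+2\bigl(\mu_{RR}\otimes\mu_{BR}^{(1)}-\mu_{BB}\otimes\mu_{BR}^{(1)}\bigr)
\]
and configure $X$ separately to maximize each summand. As in Proposition~\ref{d2bound}, the condition $ay<x$ dictates the optimal placement: coloring the far left of $\left[1,\frac{n}{2}\right]$ red and the remainder blue maximizes the first summand, while the reverse coloring maximizes the second. For a fixed coloring of $X$ and a $Y_1$ pair sitting at position $y$, the summand counts the number of monochromatic $X$-pairs with $x>ay$ weighted by $\pm 1$; sweeping $y$ across $\left[1,\frac{n}{2a}\right]$ then traces out the ``pairs gained'' curve of Figure~\ref{fig:dagraphcombined}. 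Because the threshold $ay$ replaces $2y$ and the sweep length shrinks from $\frac{n}{4}$ to $\frac{n}{2a}$, the corners of the trapezoidal profile relocate to positions proportional to $1/a$ (for instance the left corner moves from $y=\frac{\mu_{RR}}{2}$ to $y=\frac{\mu_{RR}}{a}$), but the profile stays piecewise linear and the two configurations give curves of the same qualitative shape as in Figures~\ref{fig:A1} and~\ref{fig:A2}.

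Finally, since $\mu_{RB}^{(1)}+\mu_{BR}^{(1)}=\gamma_1 n$ is fixed, maximizing $D_a$ amounts to packing a base of length $\gamma_1 n$ under whichever of the two curves is higher. Exactly as in the $a=2$ case, the WLOG assumption $\mu_B\ge\mu_R$ gives $\mu_{BB}\ge\mu_{RR}$, so the curve produced by the first (red-then-blue) configuration dominates the one produced by the second. Hence the optimum is $2A_1$, where $A_1$ is the largest area placeable under the dominant curve over a base of length $\gamma_1 n$, which yields $D_a\le 2A_1$; and since $ay<x\le\frac{n}{2}$ forces $y\le\frac{n}{2a}$, the base cannot exceed $\frac{n}{2a}$, giving the stated range $\gamma_1\le\frac{1}{2a}$.

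The routine part is the sixteen-row bookkeeping, which transfers unchanged from Figure~\ref{fig:smallchart}. I expect the main obstacle to be verifying the shape of the pairs-gained curve for general $a$: one must check that scaling the threshold from $2y$ to $ay$ and the $y$-range from $\frac{n}{4}$ to $\frac{n}{2a}$ merely relocates the corner points of the profile without altering its piecewise-linear character, so that the domination $A_1\ge A_2$ is preserved and the area bound underlying Figure~\ref{fig:dagraphcombined} still governs $D_a$.
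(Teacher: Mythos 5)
Your proposal is correct and follows essentially the same route as the paper, which in fact gives no written proof of Proposition~\ref{dabound} beyond asserting that $D_a$ ``can be bounded the same way as in the previous section'' and supplying Figure~\ref{fig:dagraphcombined}; your adaptation of Lemma~\ref{D2lemma} and Proposition~\ref{d2bound} with the threshold $ay<x$, the $Y_1$ pairs $\{y,\frac{n}{a}+1-y\}$ on $[1,\frac{n}{2a}]$, and the relocated corners $\frac{\mu_{RR}}{a}$ and $\frac{n}{2a}-\frac{\mu_{BB}}{a}$ matches the paper's figure exactly. The only detail worth adding is that the rising and falling edges of the trapezoidal profile now have slope $a$ rather than $2$, which is what produces the factors $\frac{a}{2}(\cdot)^2$ in the area computation of Proposition~\ref{Na}.
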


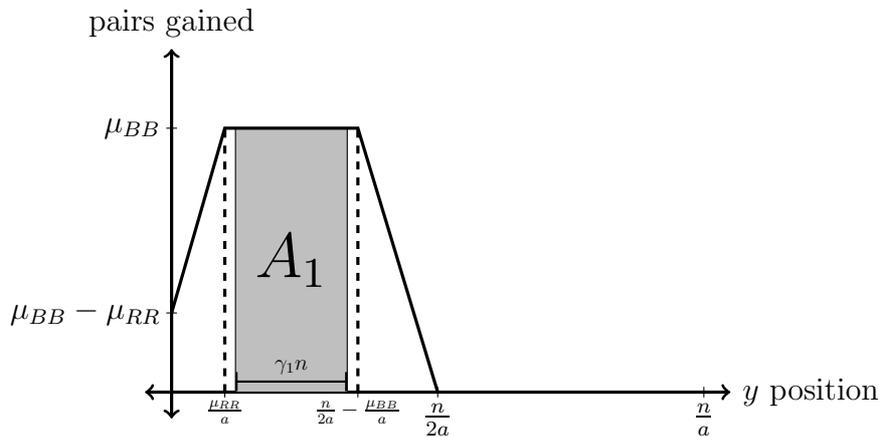
\begin{figure}[ht]
\begin{center}
\begin{tikzpicture}[scale=0.7]
\def\xmin{-5.5}
\def\xmax{5.5}
\def\ymin{-4.5}
\def\ymax{4.5}
\draw[very thick, <->] (\xmin,\ymin+2.5)--(\xmax,\ymin+2.5) node[right,scale=1]{$y$ position};
\draw[very thick, <->] (\xmin+0.5,\ymin+2)--(\xmin+0.5,\ymax) node[above,scale=1]{pairs gained};
\draw (0,\ymin+2.5) node[below,scale=1] {$\frac{n}{2a}$};
\draw (0,\ymin+2.4) -- (0,\ymin+2.6);
\draw (\xmax-0.5,\ymin+2.5) node[below,scale=1] {$\frac{n}{a}$};
\draw (\xmax-0.5,\ymin+2.4) -- (\xmax-0.5,\ymin+2.6);
\draw (\xmin+0.5,\ymax-1.5) node[left,scale=1] {$\mu_{BB}$};
\draw (\xmin+0.4,\ymax-1.5) -- (\xmin+0.6,\ymax-1.5);
\draw (\xmin+0.5,\ymax-5) node[left,scale=1] {$\mu_{BB}-\mu_{RR}$};
\draw (\xmin+0.4,\ymax-5) -- (\xmin+0.6,\ymax-5);
\draw (\xmin+1.5,\ymin+2.5) node[below,scale=0.7] {$\frac{\mu_{RR}}{a}$};
\draw (\xmin+1.5,\ymin+2.4) -- (\xmin+1.5,\ymin+2.6);
\draw (\xmin+4,\ymin+2.5) node[below,scale=0.7] {$\frac{n}{2a}-\frac{\mu_{BB}}{a}$};
\draw (\xmin+4,\ymin+2.4) -- (\xmin+4,\ymin+2.6);
\draw[fill=lightgray] (\xmin+1.7,\ymin+2.5) rectangle (\xmin+3.8,\ymax-1.5);
\draw [|-|, thick] (\xmin+1.7,\ymin+2.7) -- (\xmin+3.8,\ymin+2.7);
\draw (\xmin+2.75,\ymin+2.7) node[above,scale=0.7] {$\gamma_1 n$};
\draw (\xmin+2.75,\ymin+5) node[scale=2]{$A_1$};
\draw[very thick] (\xmin+0.5,\ymax-5) -- (\xmin+1.5,\ymax-1.5) -- (\xmin+4,\ymax-1.5) -- (0,\ymin+2.5);
\draw[very thick, dashed] (\xmin+1.5,\ymin+2.5)--(\xmin+1.5,\ymax-1.5);
\draw[very thick, dashed] (\xmin+4,\ymin+2.5)--(\xmin+4,\ymax-1.5);
\end{tikzpicture}
\caption{The upper bound of $D_a$}
\label{fig:dagraphcombined}
\end{center}
\end{figure}

\noindent By combining Propositions \ref{nonmonoa} and \ref{dabound},
we obtain the upper bound for  $|\mathcal{N}(n)|$ as follows:

\begin{proposition}\label{Na}
Over all 2-colorings of $[1,n]$, the maximum number of non-monochro-matic triples satisfying $x+ay=z,\, a \geq 2$ is
\begin{equation*} 
|\mathcal{N}(n)| \leq  \frac{n^2}{2a}-\frac{n^2}{2a(a^2+2a+3)}+\mathcal{O}(n).\\
\end{equation*}
\end{proposition}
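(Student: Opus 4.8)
The plan is to mirror the three-case optimization carried out for $a=2$ in Proposition \ref{N2}, tracking how each constant that depends on the slope of the triangles in Figure \ref{fig:d2graphcombined} is replaced by its general-$a$ counterpart. Combining Propositions \ref{nonmonoa} and \ref{dabound} and assuming without loss of generality that $\mu_B \geq \mu_R$, the problem reduces to maximizing
\[ \Delta := \frac{1}{2}\left( \frac{\mu_R\mu_B}{a} + \mu_R\mu_{B_1} + \mu_B\mu_{R_1} + 2A_1 \right), \]
where $A_1$ is the largest area of a region of base $\gamma_1 n$ fitting under the curve of Figure \ref{fig:dagraphcombined}; the resulting $\Delta_{max}$ is then the claimed upper bound for $|\mathcal{N}(n)|$. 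As before, this is a two-stage optimization: first express $A_1$ in closed form in terms of $\mu_R, \mu_B, \gamma, \gamma_1$, then optimize $\gamma$ and $\gamma_1$ against the feasibility constraints $\gamma_1 n \leq \min(\mu_{R_1}, \mu_{B_1})$ and $\gamma n \geq |\mu_{R_1}-\mu_{R_2}|$, and finally maximize the resulting expression in the two free variables $\mu_R$ and $\mu_{R_1}$.

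First I would pin down the geometry of Figure \ref{fig:dagraphcombined}. Using that the $X$-pairs $\{x,n+1-x\}$ partition $[1,n]$, so that $\mu_{RR}=\frac{\mu_R-\gamma n}{2}$, $\mu_{BB}=\frac{\mu_B-\gamma n}{2}$ and $\mu_{RR}+\mu_{BB}=\frac{n}{2}-\gamma n$, the trapezoid has slanted edges of slope $\pm a$, a flat top at height $\mu_{BB}$ of width $\frac{\gamma n}{a}$, and a left endpoint at height $\mu_{BB}-\mu_{RR}$. To fill a base of width $\gamma_1 n$ with maximal area one takes the flat top first and then extends symmetrically into the two slanted edges until the shorter (left) edge is exhausted. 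This produces exactly three regimes, which are the analogues of the three cases of Proposition \ref{N2} with the thresholds $\frac{\gamma n}{2}$ and $\frac{\mu_R}{2}$ replaced by $\frac{\gamma n}{a}$ and $\frac{\mu_R}{a}$: in the first, $A_1 = \mu_{BB}\,\gamma_1 n$; in the second,
\[ A_1 = \mu_{BB}\,\gamma_1 n - \frac{a}{4}\left(\gamma_1 n - \frac{\gamma n}{a}\right)^2; \]
and in the third one subtracts the full left triangle of area $\frac{\mu_{RR}^2}{2a}$ together with the leftover right triangle. Each collapses to the corresponding formula of Proposition \ref{N2} at $a=2$.

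Next, within each regime I would run the same optimization as in Section \ref{a=2}. Because the two feasibility constraints involve a minimum and an absolute value, each case splits into subcases A, B, C (subcase D again reducing to a single degenerate point and being discarded), on each of which $\Delta$ is a concave quadratic in $\mu_R$ and $\mu_{R_1}$ over a polygonal region; maximizing and comparing the finitely many candidate values---most cleanly with computer algebra, as was done for $a=2$---should locate the global maximum in the analogue of Case 3, subcase A, where $\gamma_1 n = \mu_{B_1}$ and $\gamma n = \mu_{R_1}-\mu_{R_2}$ (cf.\ Figure \ref{fig:casethreetable}). Substituting the optimal $(\mu_{R_1},\mu_R)$ then yields
\[ \Delta_{max} = \frac{n^2}{2a} - \frac{n^2}{2a(a^2+2a+3)} + \mathcal{O}(n), \]
the desired bound.

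The main obstacle is entirely the bookkeeping of this many-branch optimization with $a$ kept symbolic. Two points require genuine care. First, the feasibility constraints of the $a=2$ analysis must be re-examined for general $a$: the $X$-pairs split $[1,n]$ at $n/2$, whereas the counts $\mu_{R_1},\mu_{R_2}$ split at $n/a$, so for $a>2$ these no longer coincide and the relation $\gamma n \geq |\mu_{R_1}-\mu_{R_2}|$ must be re-derived in the correct form. Second, one must confirm that the Case 3 / subcase A candidate strictly dominates every other case and subcase for all $a\geq 2$, and not merely at $a=2$. A safe way to manage both is to check that each intermediate quantity specializes to its Proposition \ref{N2} value at $a=2$ (in particular that $\Delta_{max}$ reduces to $\frac{5n^2}{22}$), and then to establish the dominance by comparing the closed-form quadratic maxima as explicit functions of $a$.
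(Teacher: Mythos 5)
Your proposal follows essentially the same route as the paper: combine Propositions \ref{nonmonoa} and \ref{dabound}, express $A_1$ in the same three regimes with thresholds $\frac{\gamma n}{a}$ and $\frac{\mu_R}{a}$ (your closed forms for $A_1$ agree with the paper's), run the subcase A--C optimization in $\mu_R,\mu_{R_1}$, and identify Case 3, subcase A with $\gamma_1 n=\mu_{B_1}$ and $\gamma n=\mu_{R_1}-\mu_{R_2}$ as giving $\Delta_{max}=\frac{n^2}{2a}-\frac{n^2}{2a(a^2+2a+3)}$. The subtlety you flag about re-deriving $\gamma n\geq|\mu_{R_1}-\mu_{R_2}|$ for $a>2$ is a genuine one that the paper passes over silently, though only the one-sided inequality $\gamma n\geq\mu_{R_1}-\mu_{R_2}$ is needed in the dominant subcase A, and that direction does survive for all $a\geq 2$ since only the pairs straddling $\frac{n}{a}$ can push $\mu_{R_1}$ above $\mu_{R_2}$.
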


\begin{proof} Without loss of generality, we assume $\mu_B \geq \mu_R$. Suppose

\[ \Delta := \dfrac{1}{2}\left( \dfrac{\mu_R\mu_B}{a}+\mu_R\mu_{B_1}
+\mu_B\mu_{R_1}+2A_1 \right).\]

\noindent Propositions \ref{nonmonoa} and \ref{dabound} show that
optimizing $\Delta$ will give us the upper bound for $|\mathcal{N}(n)|.$  We will call this optimum $\Delta_{max}$.  In order to find $\Delta_{max}$, we consider three different cases to compute $A_1$.  Like before, each case will be subjected to the conditions listed in Figure \ref{fig:global}. \begin{description}
\item[Case 1:] $\gamma_1 n <\frac{\gamma n}{a}$.
\item[Case 2:] $\frac{\gamma n}{a}\leq\gamma_1 n <\frac{\mu_R}{a}$.
\item[Case 3:]  $\frac{\mu_R}{a}\leq\gamma_1 n$.
\end{description}
Here, we show only the details for Case 3A which will give us the best upper bound like in the previous section.
\begin{align*}
A_1&=\mu_{BB}\cdot\gamma_1 n-\frac{a}{2}\cdot\left(\frac{\mu_{RR}}{a}\right)^2
-\frac{a}{2}\cdot\left(\gamma_1 n-\frac{\gamma n}{a}-\frac{\mu_{RR}}{a}\right)^2\\
&=\dfrac{a}{2}\left(\frac{n}{a}-\gamma_1 n\right)\cdot\gamma_1 n
-\frac{1}{2a}\left((\gamma n+\mu_{RR})^2+\mu_{RR}^2\right)\\
&=\frac{a}{2}\left(\frac{n}{a}-\gamma_1 n\right)\cdot\gamma_1 n
-\frac{(\gamma n)^2}{4a}-\frac{\mu_{R}^2}{4a}.\\
\end{align*}

\noindent Similar to before, we want $\gamma_1 n$ to be as close to $\frac{n}{2a}$ as possible and $\gamma n$ should be as small as possible.  This is achieved by setting  $\gamma_1 n = \mu_{B_1}$ and $\gamma n=\mu_{R_1}-\mu_{R_2}.$  Then
\[\Delta_{max} = \frac{n^2}{2a}-\frac{n^2}{2a(a^2+2a+3)}+\mathcal{O}(n),\]
which is attained when  $\mu_{R_1} = \frac{a+1}{a^2+2a+3}$ and $\mu_{R} = \frac{a+2}{a^2+2a+3}.$
\end{proof} 
\vspace{0.2cm}

\begin{proof}[Proof of Theorem \ref{limitbreak}]
An upper bound of the minimum can be obtained from a coloring on $[1,n]$.  We color $[R,B,R]$ with the ratio $\left[1,a+\frac{1}{a+1},\frac{1}{a+1}\right]$ as illustrated in Figure \ref{fig:optimalcoloringa}, which was discovered in \cite{butler} and \cite{aek}.
\vspace{0.2cm}

\begin{figure}[ht]
\begin{center}
\begin{tikzpicture}[scale=0.9]
\def\xmin{-0.5}
\def\xmax{11.5}
\def\ymin{-1}
\def\ymax{1}
\draw[very thick, <-] (\xmin,\ymin+0.5)--(\xmin+0.5,\ymin+0.5);
\draw[very thick, red, -] (\xmin+0.5,\ymin+0.5)--(\xmin+3,\ymin+0.5);
\draw[very thick, blue, -] (\xmin+3,\ymin+0.5)--(\xmin+9.5,\ymin+0.5);
\draw[very thick, red, -] (\xmin+9.5,\ymin+0.5)--(\xmin+11,\ymin+0.5);
\draw[very thick, ->] (\xmin+11,\ymin+0.5)--(\xmin+11.5,\ymin+0.5);
\draw (\xmin+0.5,\ymin+0.4) -- (\xmin+0.5,\ymin+0.6);
\draw (\xmin+1.7,\ymin+0.5) node[below,scale=1] {1};
\draw[red] (\xmin+1.7,\ymin+0.5) node[above,scale=1] {red};
\draw (\xmin+3,\ymin+0.3) node[below,scale=1] {:};
\draw (\xmin+3,\ymin+0.4) -- (\xmin+3,\ymin+0.6);
\draw (\xmin+6.3,\ymin+0.5) node[below,scale=1] {$a+\frac{1}{a+1}$};
\draw[blue] (\xmin+6.3,\ymin+0.5) node[above,scale=1] {blue};
\draw (\xmin+9.5,\ymin+0.3) node[below,scale=1] {:};
\draw (\xmin+9.5,\ymin+0.4) -- (\xmin+9.5,\ymin+0.6);
\draw (\xmin+10.2,\ymin+0.5) node[below,scale=1] {$\frac{1}{a+1}$};
\draw[red] (\xmin+10.2,\ymin+0.5) node[above,scale=1] {red};
\draw (\xmin+11,\ymin+0.4) -- (\xmin+11,\ymin+0.6);
\end{tikzpicture}
\caption{The Optimal Coloring for $x+ay=z$}
\label{fig:optimalcoloringa}
\end{center}
\end{figure}
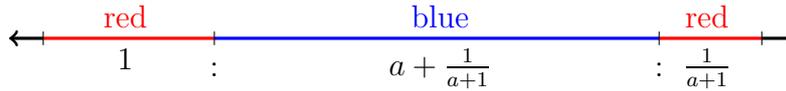

\noindent This coloring gives us $\frac{n^2}{2a(a^2+2a+3)}+\mathcal{O}(n)$  monochromatic triples, $(x,y,x+ay)$.
\vspace{0.2cm}

\noindent For the lower bound of the minimum, we use Proposition \ref{Na} to get
\begin{align*}
|\mathcal{M}(n)| &= \frac{n^2}{2a}- |\mathcal{N}(n)| \\
&\geq \frac{n^2}{2a(a^2+2a+3)}+\mathcal{O}(n).
\end{align*}

\noindent Because the lower and upper bounds match, we have therefore shown the desired result.
\end{proof}

\section{Conjectures}

In this section, we present conjectures on variations of Graham's original problem.  Denote by $R,B,G$ the colors red, blue, and green respectively.
\vspace{0.2cm}

\begin{enumerate}
\item{Equation: $ax+by=az$ where $a,b$ are integers.
\begin{enumerate}
\item{Case 1: $a>b\geq2, \;\ gcd(a,b)=1$\\
\noindent The coloring that gives the minimum number of monochromatic solutions over any 2-coloring of $[1,n]$ is
\[\left[(R^{a-1},B)^{\frac{n}{a}} \right]. \]}
\item{Case 2: $b>a\geq2, \;\ gcd(a,b)=1$\\
\noindent The coloring that gives the minimum number of monochromatic solutions over any 2-coloring of $[1,n]$ is
\[\left[(R^{a-1},B)^{\frac{n}{b}},\;\ R^{(\frac{b-a}{b})n}\right].\]}
\end{enumerate}
\noindent This has also been conjectured in \cite[p. 409]{butler}.
}
\item{Equation: $x+y+w=z$\\
\noindent The coloring that gives the minimum number of monochromatic solutions over any 2-coloring of $[1,n]$ is 
\[\left[R^{\frac{3(10-\sqrt{3})n}{97}},B^{\frac{(6+\sqrt{3})(10-\sqrt{3})n}{97}},R^{\frac{(10-\sqrt{3})n}{97}}\right],\]
\noindent with the number of monochromatic solutions to be 
\[\dfrac{n^3}{12(10+\sqrt{3})^2}+\mathcal{O}(n^2).\]}
\item{Equation: $x+y=z$ \\
\noindent The coloring that gives the maximum number of rainbow solutions over any 3-coloring of $[1,n]$ is 
\[\left[(R,B)^{\frac{n}{5}},(G,B)^{\frac{3n}{10}}\right],\]
\noindent with the number of rainbow solutions to be
\[\dfrac{n(n+1)}{10}.\]}
\end{enumerate}


\end{document}